\newcommand{\mA}{\mathbf{A}}
\newcommand{\mB}{\mathbf{B}}
\newcommand{\Dq}{\mathbf{D}_{\mathbf{q}_0}}
\newcommand{\mI}{\mathbf{I}}
\newcommand{\mJ}{\mathbf{J}}
\newcommand{\mL}{\mathbf{\Lambda}}
\newcommand{\mTheta}{\mathbf{\Theta}}
\newcommand{\mM}{\mathbf{M}}
\newcommand{\mQ}{\mathbf{Q}}
\newcommand{\mX}{\mathbf{X}}
\newcommand{\va}{\mathbf{a}}
\newcommand{\vb}{\mathbf{b}}
\newcommand{\ve}{\mathbf{e}}
\newcommand{\vl}{\mathbf{l}}
\newcommand{\vq}{\mathbf{q}}
\newcommand{\vx}{\mathbf{x}}
\newcommand{\ip}[1]{\left\langle #1\right\rangle}
\newcommand{\ipm}[2]{\left\langle #1\right\rangle_{#2}}
\newcommand{\norm}[2]{\left\| #1 \right\|_{#2}}
\newcommand{\submat}[2]{\left[ #1 \right]_{#2}}
\newcommand{\tvec}{\mathrm{vec}}
\newcommand{\bmat}[1]{\begin{bmatrix}#1\end{bmatrix}}
\newcommand{\mpi}{\mbox{\boldmath$\pi$}}
\title{Spectral Methods for Parameterized Matrix Equations}
\author{Paul G.~Constantine\thanks{Institute for Computational and Mathematical
Engineering,
Stanford University, Stanford, California, 94305 ({\tt paul.constantine@stanford.edu}).}
\and 
David F.~Gleich\thanks{Institute for Computational and Mathematical
Engineering,
Stanford University, Stanford, California, 94305 ({\tt dgleich@stanford.edu}).}
\and
Gianluca Iaccarino\thanks{Institute for Computational and Mathematical
Engineering, Department of Mechanical Engineering,
Stanford University, Stanford, California, 94305 ({\tt jops@stanford.edu}).}
}
\begin{document}

\maketitle

\begin{abstract}
We apply polynomial approximation methods --- known in the numerical PDEs
context as \emph{spectral methods} --- to approximate the vector-valued
function that satisfies a linear system of equations where the matrix 
and the right hand side depend on a parameter. 
We derive both an interpolatory pseudospectral method and a residual-minimizing
Galerkin method, and we show how each can be interpreted as solving a
truncated infinite system of equations; the difference between the two methods
lies in where the truncation occurs. Using classical theory, we derive 
asymptotic error estimates related to the region of analyticity of the solution,
and we present a practical residual error estimate. We verify the results with
two numerical examples.
\end{abstract}

\begin{keywords} 
parameterized systems, spectral methods
\end{keywords}

\pagestyle{myheadings}
\thispagestyle{plain}
\markboth{P.~G. CONSTANTINE, D.~F. GLEICH, AND G.~IACCARINO}{SPECTRAL METHODS FOR MATRIX
EQUATIONS}

\section{Introduction}
We consider a system of linear equations where the elements of the matrix of
coefficients and right hand side depend analytically on a parameter.
Such systems often arise as an intermediate step within computational methods
for engineering models which depend on one or more parameters. A large class of
models employ such parameters to represent uncertainty in the input quantities;
examples include PDEs with random
inputs~\cite{Babuska04,Frauenfelder2005,Xiu02}, image deblurring
models~\cite{Chung08}, and noisy inverse problems~\cite{Chandrasekaran98}.
Other examples of parameterized linear systems occur in electronic circuit
design~\cite{Li2009}, applications of
PageRank~\cite{Brezinski06,Constantine07}, and dynamical
systems~\cite{Dieci03}. Additionally, we note a recent rational
interpolation scheme proposed by Wang et al.~\cite{Wang08} where each
evaluation of the interpolant involves a constrained least-squares problem that
depends on the point of evaluation. Parameterized linear operators have been
analyzed in their own right in the context of perturbation theory; the standard
reference for this work is Kato~\cite{Kato80}.


In our case, we are interested in approximating the vector-valued function
that satisfies the parameterized matrix equation. We will analyze the use of
polynomial approximation methods, which have evolved under the heading
``spectral methods'' in the context of numerical methods for
PDEs~\cite{Boyd01,Canuto06,Hesthaven07}. In their most basic form, these
methods are characterized by a global approximation of the function of interest by a finite
series of orthogonal (algebraic or trigonometric) polynomials. For smooth
functions, these methods converge geometrically, which is the primary reason for their popularity. The
use of spectral methods for parameterized equations is not unprecedented. In
fact, the authors were motivated primarily by the so-called polynomial chaos
methods~\cite{Ghanem91,Xiu02} and related work~\cite{Babuska04,Babuska05,Xiu05}
in the burgeoning field of uncertainty quantification. There has been some work
in the linear algebra community analyzing the fully discrete problems that
arise in this context~\cite{Ernst09,Powell08,Elman05}, but we know of no
existing work addressing the more general problem of parameterized matrix
equations. 

There is an ongoing debate in spectral methods communities surrounding the
relative advantages of Galerkin methods versus pseudospectral methods. In the
case of parameterized matrix equations, the interpolatory
pseudospectral methods only require the solution of the parameterized model
evaluated at a discrete set of points,
which makes parallel implementation straightforward. In contrast,
the Galerkin method requires the solution of a coupled linear system whose
dimension is many times larger than the original parameterized set of
equations. We offer insight into this contest by establishing a fair ground
for rigorous comparison and deriving a concrete relationship between the two
methods.

In this paper, we will first describe the parameterized matrix equation and
characterize its solution in section~\ref{sec:parmmats}. We then derive a spectral
Galerkin method and a pseudospectral method for approximating the solution to
the parameterized matrix equation in section~\ref{sec:spectral}. In
section~\ref{sec:connections}, we analyze the relationship between these methods
using the symmetric, tridiagonal Jacobi matrices -- techniques which are
reminiscent of the analysis of Gauss quadrature by Golub and
Meurant~\cite{Golub94} and Gautschi~\cite{Gautschi02}. We derive error
estimates for the methods that relate the geometric rate of convergence to the
size of the region of analyticity of the solution in section~\ref{sec:error},
and we conclude with simple numerical examples in section~\ref{sec:examples}.
See table~\ref{tab:notation} for a list of notational conventions, and note
that \emph{all index sets begin at 0 to remain consistent with the ordering of
a set of polynomials by their largest degree.}

\begin{table}
\centering
\label{tab:notation}
\caption{We attempt to use a consistent and clear notation throughout the
paper. This table details the notational conventions, which we use unless
otherwise noted. Also, all indices begin at 0. }
\begin{tabular}{clc}
\toprule
\textbf{Notation} & \textbf{Meaning}\\
\midrule
$A(s)$ & a square matrix-valued function of a parameter $s$\\
$b(s)$ & a vector-valued function of the parameter $s$\\ 
$\mA$ & a constant matrix\\ 
$\vb$ & a constant vector\\
$\ip{\cdot}$ & the integral with respect to a given weight function\\
$\ipm{\cdot}{n}$ & the integral $\ip{\cdot}$ approximated by an $n$-point Gauss
quadrature rule\\
$\submat{\mM}{r\times r}$ & the first $r\times r$ principal minor of a matrix
$\mM$\\
\bottomrule
\end{tabular}
\end{table}

\section{Parameterized Matrix Equations}
\label{sec:parmmats}
In this section, we define the specific problem we will study and characterize
its solution. We consider problems that depend on a single parameter $s$ that
takes values in the finite interval $[-1,1]$. Assume that the
interval $[-1,1]$ is equipped with a positive scalar weight function $w(s)$ such
that all moments exist, i.e.
\begin{equation}
\ip{s^k}\equiv\int_{-1}^1 s^k w(s)\,ds<\infty,\qquad k=1,2,\dots,
\end{equation}
and the integral of $w(s)$ is equal to 1. We will use the bracket notation to
denote an integral against the given weight function. In a stochastic context,
one may interpret this as an expectation operator where $w(s)$ is the density
function of the random variable $s$.

Let the $\mathbb{R}^N$-valued function $x(s)$ satisfy the linear system of
equations
\begin{equation}
\label{eq:main}
A(s)x(s)=b(s),\qquad s\in[-1,1]
\end{equation}
for a given $\mathbb{R}^{N\times N}$-valued function $A(s)$ and
$\mathbb{R}^N$-valued function $b(s)$. We assume that both $A(s)$ and $b(s)$
are analytic in a region containing $[-1,1]$, which implies that they have a
convergent power series
\begin{equation}
\label{eq:powerseries}
A(s)=\mA_0+\mA_1s+\mA_2s^2+\cdots,\qquad b(s)=\vb_0+\vb_1s+\vb_2s^2+\cdots,
\end{equation}
for some constant matrices $\mA_i$ and constant vectors $\vb_i$. Additionally,
we assume that $A(s)$ is bounded away from singularity for all $s\in[-1,1]$.
This implies that we can write $x(s)=A^{-1}(s)b(s)$.


The elements of the solution $x(s)$ can also be written using Cramer's
rule~\cite[Chapter 6]{Meyer00} as a ratio of determinants.
\begin{equation}
\label{eq:cramer}
x_i(s) = \frac{\det(A_i(s))}{\det(A(s))}, \qquad i=0,\dots,N-1,
\end{equation}
where $A_i(s)$ is the parameterized matrix formed by replacing the $i$th column
of $A(s)$ by $b(s)$. From equation \eqref{eq:cramer} and the invertibility of
$A(s)$, we can conclude that $x(s)$ is analytic in a region containing
$[-1,1]$. 
%

Equation \eqref{eq:cramer} reveals the underlying structure
of the solution as a function of $s$. If $A(s)$ and $b(s)$ depend polynomially
on $s$, then \eqref{eq:cramer} tells us that $x(s)$ is a rational function. Note
also that this structure is independent of the particular weight function $w(s)$. 

\section{Spectral Methods}
\label{sec:spectral}
In this section, we derive the spectral methods we use to approximate the
solution $x(s)$. We begin with a brief review of the relevant theory of
orthogonal polynomials, Gaussian quadrature, and Fourier series. We include this
section primarily for the sake of notation and refer the reader to a standard
text on orthogonal polynomials~\cite{Szego39} for further theoretical details
and~\cite{Gautschi04} for a modern perspective on computation.

\subsection{Orthogonal Polynomials and Gaussian Quadrature}

Let $\mathbb{P}$ be the space of real polynomials defined on $[-1,1]$,
and let $\mathbb{P}_n\subset\mathbb{P}$ be the space of polynomials of degree
at most $n$. For any $p$, $q$ in $\mathbb{P}$, we define the inner product as
\begin{equation}
\label{eq:innerproduct}
\ip{pq} \equiv \int_{-1}^1 p(s)q(s)w(s)\,ds. 
\end{equation} 
We define a norm on $\mathbb{P}$ as $\norm{p}{L^2} = \sqrt{\ip{p^2}}$, which is
the standard $L^2$ norm for the given weight $w(s)$. Let $\{\pi_k(s)\}$ be the
set of polynomials that are orthonormal with respect to 
$w(s)$, i.e.~$\ip{\pi_i\pi_j}=\delta_{ij}$. It is known that $\{\pi_k(s)\}$
satisfy the three-term recurrence relation
\begin{equation}
\label{eq:3term}
\beta_{k+1}\pi_{k+1}(s) = (s-\alpha_k)\pi_k(s) - \beta_k\pi_{k-1}(s), \qquad
k=0,1,2,\dots,
\end{equation}
with $\pi_{-1}(s)=0$ and $\pi_0(s)=1$. If we consider only the
first $n$ equations, then we can rewrite \eqref{eq:3term} as
\begin{equation}
s\pi_k(s) =
\beta_k\pi_{k-1}(s)+\alpha_k\pi_k(s)+\beta_{k+1}\pi_{k+1}(s),\qquad
k=0,1,\dots,n-1.
\end{equation}
Setting $\mpi_n(s) =
[\pi_0(s),\pi_1(s),\dots,\pi_{n-1}(s)]^T$, we can write this conveniently in
matrix form as
\begin{equation}
\label{eq:mat3term}
s\mpi_n(s) = \mJ_n\mpi_n(s) + \beta_{n}\pi_{n}(s)\ve_{n}
\end{equation}
where $\ve_n$ is a vector of zeros with a one in the last entry, and $\mJ_n$
(known as the \emph{Jacobi matrix}) is a symmetric, tridiagonal matrix defined
as
\begin{equation}
\label{eq:jacobi}
\mJ_n = 
\begin{bmatrix}
\alpha_0 & \beta_1& & &  \\
\beta_1 & \alpha_1 & \beta_2 & &  \\
 & \ddots & \ddots & \ddots &  \\
 & & \beta_{n-2} & \alpha_{n-2} & \beta_{n-1}\\
 & & & \beta_{n-1} & \alpha_{n-1}
\end{bmatrix}.
\end{equation}
The zeros $\{\lambda_i\}$ of $\pi_{n}(s)$ are the eigenvalues of $\mJ_n$ and
$\mpi_n(\lambda_i)$ are the corresponding eigenvectors; this follows directly
from \eqref{eq:mat3term}. 
Let $\mQ_n$ be the orthogonal matrix of eigenvectors of $\mJ_n$. Then we write
the eigenvalue decomposition of $\mJ_n$ as
\begin{equation}
\label{eq:eigJ}
\mJ_n = \mQ_n\mL_n\mQ_n^T.
\end{equation}
It is known (c.f.~\cite{Gautschi04}) that the eigenvalues $\{\lambda_i\}$ are the
familiar Gaussian quadrature points associated with the weight function
$w(s)$. The quadrature weight $\nu_i$ corresponding to $\lambda_i$ is equal to
the square of the first component of the eigenvector associated with
$\lambda_i$, i.e.
\begin{equation}
\mQ(0,i)^2 \;=\; \nu_i.
\end{equation}
The weights $\{\nu_i\}$ are known to be strictly positive. We will use
these facts repeatedly in the sequel. For an integrable scalar function $f(s)$,
we can approximate its integral by an $n$-point Gaussian quadrature rule, which
is a weighted sum of function evaluations,
\begin{equation}
\label{eq:gq}
\int_{-1}^1f(s)w(s)\,ds = \sum_{i=0}^{n-1} f(\lambda_i)\nu_i + R_n(f).
\end{equation}
If $f\in\mathbb{P}_{2n-1}$, then $R_n(f)=0$; that is to say the \emph{degree of
exactness} of the Gaussian quadrature rule is $2n-1$. We use the notation
\begin{equation}
\label{eq:gqnote}
\ipm{f}{n}\equiv \sum_{i=0}^{n-1} f(\lambda_i)\nu_i
\end{equation}
to denote the Gaussian quadrature rule. This is a discrete approximation to the
true integral.

\subsection{Fourier Series}

The polynomials $\{\pi_k(s)\}$ form an orthonormal basis for the Hilbert space
\begin{equation}
\label{eq:hilbert}
L^2\;\equiv\;L^2_w([-1,1])\;=\;
\left\{f:[-1,1]\rightarrow\mathbb{R}\;\left|\right.\;\norm{f}{L^2}<\infty\right\}.
\end{equation}
Therefore, any $f\in L^2$ admits a convergent \emph{Fourier series}
\begin{equation}
\label{eq:fourier}
f(s) = \sum_{k=0}^\infty \ip{f\pi_k}\pi_k(s).
\end{equation}
The coefficients $\ip{f\pi_k}$ are called the \emph{Fourier coefficients}. If we
truncate the series \eqref{eq:fourier} after $n$ terms, we are left
with a polynomial of degree $n-1$ that is the best approximation polynomial in
the $L^2$ norm. In other words, if we denote
\begin{equation}
\label{eq:truncation}
P_nf(s) = \sum_{k=0}^{n-1}\ip{f\pi_k}\pi_k(s),
\end{equation}
then
\begin{equation}
\label{eq:bestapprox}
\norm{f-P_nf}{L^2} = \inf_{p\in\mathbb{P}_{n-1}}\norm{f-p}{L^2}.
\end{equation}
In fact, the error made by truncating the series is equal to the sum of squares
of the neglected coefficients,
\begin{equation}
\label{eq:neglected}
\norm{f-P_nf}{L^2}^2 = \sum_{k=n}^\infty \ip{f\pi_k}^2.
\end{equation}
These properties of the Fourier series motivate the theory and practice of spectral
methods.  

We have shown that the each element of the solution $x(s)$ of the
parameterized matrix equation is analytic in a region containing the closed
interval $[-1,1]$. Therefore it is continuous and bounded on $[-1,1]$, which
implies that $x_i(s)\in L^2$ for $i=0,\dots,N-1$. We can thus write the
convergent Fourier expansion for each element using vector notation as
\begin{equation}
\label{eq:solfourier}
x(s) = \sum_{k=0}^\infty \ip{x\pi_k}\pi_k(s).
\end{equation}
Note that we are abusing the bracket notation here, but this will make further
manipulations very convenient. The computational strategy is to choose a
truncation level $n-1$ and estimate the coefficients of the truncated expansion. 

\subsection{Spectral Collocation}

The term \emph{spectral collocation} typically refers to the technique of
constructing a Lagrange interpolating polynomial through the exact solution
evaluated at the Gaussian quadrature points. Suppose that $\lambda_i$,
$i=0,\dots,n-1$ are the Gaussian quadrature points for the weight function $w(s)$.
We can construct an $n-1$ degree polynomial interpolant of the solution through
these points as
\begin{equation}
\label{eq:collocation}
x_{c,n}(s) \;=\; \sum_{i=0}^{n-1}x(\lambda_i)\ell_i(s) \;\equiv\;\mX_c\vl_n(s).
\end{equation}
The vector $x(\lambda_i)$ is the solution to the equation
$A(\lambda_i)x(\lambda_i)=b(\lambda_i)$. The $n-1$ degree polynomial
$\ell_i(s)$ is the standard Lagrange basis polynomial defined as
\begin{equation}
\label{eq:lagrange}
\ell_i(s) = \prod_{j=0,\;j\not=i}^{n-1} 
\frac{s-\lambda_j}{\lambda_i-\lambda_j}.
\end{equation}
The $N\times n$ constant matrix $\mX_c$ (the subscript $c$ is for
\emph{collocation}) has one column for each $x(\lambda_i)$, and $\vl_n(s)$ is a
vector of the Lagrange basis polynomials.

By construction, the collocation polynomial $x_{c,n}$ interpolates the true
solution $x(s)$ at the Gaussian quadrature points. We will use this construction
to show the connection between the pseudospectral method and the Galerkin
method.

\subsection{Pseudospectral Methods}

Notice that computing the true coefficients of the Fourier expansion of $x(s)$
requires the exact solution. The essential idea of the pseudospectral method
is to approximate the Fourier coefficients of $x(s)$ by a Gaussian quadrature
rule. In other words,
\begin{equation}
\label{eq:pseudospec}
x_{p,n}(s) \;=\; \sum_{i=0}^{n-1}\ipm{x\pi_k}{n}\pi_k(s) \;\equiv\;\mX_p\mpi_n(s),
\end{equation}
where $\mX_p$ is an $N\times n$ constant matrix of the approximated Fourier
coefficients; the subscript $p$ is for \emph{pseudospectral}. For clarity, we
recall
\begin{equation}
\label{eq:gaussfourier}
\ipm{x\pi_k}{n} = \sum_{i=0}^{n-1}x(\lambda_i)\pi_k(\lambda_i)\nu_i.
\end{equation}
where $x(\lambda_i)$ solves $A(\lambda_i)x(\lambda_i)=b(\lambda_i)$. 
In general, the number of points in the quadrature rule need not
have any relationship to the order of truncation. However, when the number of
terms in the truncated series is equal to the number of points in the
quadrature rule, the pseudospectral approximation is equivalent to the
collocation approximation. This relationship is well-known, but we include
following lemma and theorem for use in later proofs.

\begin{lemma}
\label{lem:basischange}
Let $\vq_0$ be the first row of $\mQ_n$, and define $\Dq =
\mathrm{diag}(\vq_0)$.
The matrices $\mX_p$ and $\mX_c$ are related by the equation
$\mX_p = \mX_c\Dq\mQ_n^T$.
\end{lemma}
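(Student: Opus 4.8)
The plan is to express both constant matrices $\mX_p$ and $\mX_c$ in terms of the solution vectors $x(\lambda_i)$ at the Gaussian points and then match them. First I would write $\mX_c = [x(\lambda_0)\ \cdots\ x(\lambda_{n-1})]$ directly from \eqref{eq:collocation}. For $\mX_p$, observe from \eqref{eq:pseudospec} and \eqref{eq:gaussfourier} that the $k$th column of $\mX_p$ (the approximate $k$th Fourier coefficient) is $\sum_{i=0}^{n-1} x(\lambda_i)\pi_k(\lambda_i)\nu_i$, so that $\mX_p = \mX_c \mathbf{W} \mathbf{\Pi}^T$, where $\mathbf{W} = \mathrm{diag}(\nu_0,\dots,\nu_{n-1})$ and $\mathbf{\Pi}$ is the $n\times n$ matrix with entries $\mathbf{\Pi}(i,k) = \pi_k(\lambda_i)$, i.e.\ the $i$th row of $\mathbf{\Pi}$ is $\mpi_n(\lambda_i)^T$.

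The key identification is then $\mathbf{\Pi} = \mQ_n \Dq^{-1}$ combined with $\mathbf{W} = \Dq^2$. Both come from the eigenvector facts already recorded in the excerpt: the columns of $\mQ_n$ are the normalized eigenvectors of $\mJ_n$, and by \eqref{eq:mat3term} the (unnormalized) eigenvector for $\lambda_i$ is $\mpi_n(\lambda_i)$. The first component of $\mpi_n(\lambda_i)$ is $\pi_0(\lambda_i)=1$, while the first component of the normalized eigenvector is $\mQ_n(0,i) = \sqrt{\nu_i}$ (the quadrature-weight formula $\mQ(0,i)^2=\nu_i$), and the weights are strictly positive so this is a legitimate positive scaling. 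Hence the $i$th column of $\mQ_n$ equals $\sqrt{\nu_i}\,\mpi_n(\lambda_i)$, which says exactly $\mQ_n = \mathbf{\Pi}^T \Dq$ where $\Dq = \mathrm{diag}(\vq_0) = \mathrm{diag}(\sqrt{\nu_0},\dots,\sqrt{\nu_{n-1}})$; equivalently $\mathbf{\Pi}^T = \mQ_n \Dq^{-1}$. Substituting, $\mX_p = \mX_c \Dq^2 (\mQ_n \Dq^{-1})^T = \mX_c \Dq^2 \Dq^{-1}\mQ_n^T = \mX_c \Dq \mQ_n^T$, which is the claim.

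The main obstacle is purely bookkeeping: making sure the eigenvector normalization is pinned down correctly, namely that the sign/scale ambiguity in $\mQ_n$ is resolved consistently so that $\mQ_n(0,i) = +\sqrt{\nu_i}$ rather than $-\sqrt{\nu_i}$, and that $\Dq$ is genuinely invertible (which follows since all $\nu_i>0$). One should also note that the columns $\mpi_n(\lambda_i)$ are automatically orthogonal with respect to $\mathbf{W}$ — this is the discrete orthogonality $\sum_i \pi_j(\lambda_i)\pi_k(\lambda_i)\nu_i = \ipm{\pi_j\pi_k}{n} = \ip{\pi_j\pi_k} = \delta_{jk}$ for $j,k\le n-1$, valid by exactness of the $n$-point rule on $\mathbb{P}_{2n-1}$ — which is the same fact, repackaged, that guarantees $\mathbf{\Pi}^T\mathbf{W}\mathbf{\Pi} = \mI$ and hence that the normalization above is consistent with $\mQ_n$ being orthogonal. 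Once these normalization details are nailed down the matrix algebra is immediate.
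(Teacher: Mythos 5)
Your argument is correct and is essentially the paper's own proof recast in matrix form: the paper computes $\mX_p(:,k)=\sum_j x(\lambda_j)\pi_k(\lambda_j)\nu_j$ and rewrites $\nu_j\pi_k(\lambda_j)=\mQ_n(0,j)\mQ_n(k,j)$ columnwise, which is exactly your identification $\mQ_n(:,i)=\mQ_n(0,i)\,\mpi_n(\lambda_i)$ combined with $\nu_i=\mQ_n(0,i)^2$. One bookkeeping remark: with your indexing $\mathbf{\Pi}(i,k)=\pi_k(\lambda_i)$ the correct intermediate relation is $\mX_p=\mX_c\mathbf{W}\mathbf{\Pi}$ rather than $\mX_c\mathbf{W}\mathbf{\Pi}^T$, which is in fact what your final substitution $\mX_c\Dq^2\Dq^{-1}\mQ_n^T=\mX_c\Dq\mQ_n^T$ actually uses, so the conclusion stands.
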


\begin{proof}
Write
\begin{align*}
\mX_p(:,k) &= \ipm{x\pi_k}{n}\\
&= \sum_{j=0}^{n-1} x(\lambda_j)\pi_k(\lambda_j)\nu_j\\
&= \sum_{j=0}^{n-1}
\mX_c(:,j)\frac{1}{\|\mpi_n(\lambda_j)\|_2}\frac{\pi_k(\lambda_j)}{\|\mpi_n(\lambda_j)\|_2}\\
&= \mX_c\Dq\mQ_n^T(:,k)
\end{align*}
which implies $\mX_p = \mX_c\Dq\mQ_n^T$ as required.
\end{proof}

\begin{theorem}
\label{thm:pseudoequalcollocation}
The $n-1$ degree collocation approximation is equal to the $n-1$ degree
pseudospectral approximation using an $n$-point Gaussian quadrature rule, i.e.
\begin{equation}
x_{c,n}(s) = x_{p,n}(s).
\end{equation}
for all $s$.
\end{theorem}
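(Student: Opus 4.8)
The plan is to express both approximations in the common basis of Lagrange polynomials $\vl_n(s)$ and show the coefficient matrices coincide. By definition, $x_{c,n}(s) = \mX_c\vl_n(s)$. For the pseudospectral side, I would start from $x_{p,n}(s) = \mX_p\mpi_n(s)$ and use Lemma~\ref{lem:basischange} to write $\mX_p\mpi_n(s) = \mX_c\Dq\mQ_n^T\mpi_n(s)$. So it suffices to prove the identity $\Dq\mQ_n^T\mpi_n(s) = \vl_n(s)$, i.e.\ that the $j$th entry of $\Dq\mQ_n^T\mpi_n(s)$ equals the Lagrange basis polynomial $\ell_j(s)$.

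To establish $\Dq\mQ_n^T\mpi_n(s) = \vl_n(s)$, I would proceed as follows. Both sides are vectors of polynomials of degree $n-1$. The $j$th component of the left-hand side is $\vq_0(j)\,\mpi_n(\lambda_j)^T\mpi_n(s) / 1$ — more precisely, recalling that the eigenvector of $\mJ_n$ for $\lambda_j$ is $\mpi_n(\lambda_j)/\|\mpi_n(\lambda_j)\|_2$ and that $\vq_0(j) = \pi_0(\lambda_j)/\|\mpi_n(\lambda_j)\|_2 = 1/\|\mpi_n(\lambda_j)\|_2$ since $\pi_0 \equiv 1$. Thus the $j$th component of $\Dq\mQ_n^T\mpi_n(s)$ is
\begin{equation*}
\frac{1}{\|\mpi_n(\lambda_j)\|_2^2}\sum_{k=0}^{n-1}\pi_k(\lambda_j)\pi_k(s) \;=\; \nu_j\sum_{k=0}^{n-1}\pi_k(\lambda_j)\pi_k(s),
\end{equation*}
using the fact that $\nu_j = \mQ(0,j)^2 = 1/\|\mpi_n(\lambda_j)\|_2^2$. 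This is the (normalized) Christoffel--Darboux kernel $K_n(\lambda_j,s)$ scaled by the quadrature weight. I would then verify that this degree-$(n-1)$ polynomial in $s$ equals $\ell_j(s)$ by checking it agrees with $\ell_j$ at the $n$ interpolation nodes $\lambda_0,\dots,\lambda_{n-1}$: evaluating at $s=\lambda_i$ gives $\nu_j\sum_k \pi_k(\lambda_j)\pi_k(\lambda_i)$, which by the Gaussian-quadrature discrete orthogonality relation $\sum_{i=0}^{n-1}\nu_i\pi_k(\lambda_i)\pi_m(\lambda_i) = \delta_{km}$ (equivalently, orthonormality of the rows/columns of $\mQ_n$ after the diagonal scaling) equals $\delta_{ij}$. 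Since two polynomials of degree at most $n-1$ that agree at $n$ distinct points are identical, we conclude the $j$th component equals $\ell_j(s)$, hence $\Dq\mQ_n^T\mpi_n(s) = \vl_n(s)$.

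Combining, $x_{p,n}(s) = \mX_c\Dq\mQ_n^T\mpi_n(s) = \mX_c\vl_n(s) = x_{c,n}(s)$ for all $s$, which is the claim. The main obstacle is the middle step: cleanly justifying the discrete orthogonality relation $\sum_{i}\nu_i\pi_k(\lambda_i)\pi_m(\lambda_i) = \delta_{km}$ for $k,m \le n-1$. This follows because $\pi_k\pi_m \in \mathbb{P}_{2n-2}$ is integrated exactly by the $n$-point rule and $\ip{\pi_k\pi_m} = \delta_{km}$; equivalently it is just the statement that $\mQ_n\mQ_n^T = \mI$ combined with the identification $\nu_i = \mQ(0,i)^2$. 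I would want to state this orthogonality explicitly (or cite it from the orthogonal-polynomial background already recalled) before invoking it, since it is the linchpin of the argument.
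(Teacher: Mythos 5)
Your proof is correct, and it shares the paper's initial reduction: both invoke Lemma~\ref{lem:basischange} and reduce the theorem to a single change-of-basis identity between the Lagrange and orthonormal-polynomial bases (you prove $\Dq\mQ_n^T\mpi_n(s)=\vl_n(s)$; the paper proves the equivalent statement $\mpi_n(s)=\mQ_n\Dq^{-1}\vl_n(s)$). Where you genuinely differ is in how that identity is verified. The paper \emph{projects}: it forms $\mTheta=\ip{\vl_n\mpi_n^T}$, evaluates each entry exactly with the $n$-point rule (the integrands $\ell_i\pi_j$ have degree at most $2n-2$), obtains $\mTheta=\Dq\mQ_n^T$, and concludes since a vector of polynomials of degree at most $n-1$ is determined by its Fourier coefficients against $\pi_0,\dots,\pi_{n-1}$. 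You \emph{interpolate}: you identify the $j$th entry of $\Dq\mQ_n^T\mpi_n(s)$ as $\nu_j\sum_k\pi_k(\lambda_j)\pi_k(s)$, a scaled Christoffel--Darboux kernel, check that it equals $\delta_{ij}$ at the node $\lambda_i$, and conclude by uniqueness of degree-$(n-1)$ interpolation at $n$ distinct points. Both verifications rest on the same underlying fact (orthogonality of the square matrix $\mQ_n$, equivalently exactness of the $n$-point rule on $\mathbb{P}_{2n-1}$), so neither is more general; yours is arguably more transparent because it exhibits $\Dq\mQ_n^T\mpi_n(s)$ directly as the Lagrange basis, while the paper's projection recycles the computation already done in Lemma~\ref{lem:basischange}. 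One point of precision for the write-up: the relation you actually need at the evaluation step is the ``dual'' one, $\nu_j\sum_{k=0}^{n-1}\pi_k(\lambda_j)\pi_k(\lambda_i)=\delta_{ij}$ (a sum over polynomial degrees, i.e.\ $\mQ_n^T\mQ_n=\mI_n$), not the node-summed relation $\sum_{i}\nu_i\pi_k(\lambda_i)\pi_m(\lambda_i)=\delta_{km}$ that you state; since $\mQ_n$ is square and orthogonal the two are equivalent, and you do acknowledge this parenthetically, but you should invoke the column orthonormality directly rather than leave the transposition implicit.
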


\begin{proof}
Note that the elements of $\vq_0$ are all 
non-zero, so $\Dq^{-1}$ exists.
Then lemma \ref{lem:basischange} implies $\mX_c = \mX_p\mQ_n\Dq^{-1}$.
Using this change of variables, we can write
\begin{equation}
x_{c,n}(s) \;=\; \mX_c\vl_n(s) \;=\;
\mX_p\mQ_n\Dq^{-1}\vl_n(s).
\end{equation}
Thus it is sufficient to show that $\mpi_n(s) =
\mQ_n\Dq^{-1}\vl_n(s)$. Since this is just a vector of polynomials with
degree at most $n-1$, we can do this by multiplying each
element by each orthonormal basis polynomial up to order $n-1$ and
integrating. Towards this end we define $\mTheta \equiv \ip{\vl_n\mpi_n^T}$.

Using the polynomial exactness of the Gaussian quadrature rule, we compute
the $i,j$ element of $\mTheta$.
\begin{align*}
\mTheta(i,j) &= \ip{l_i\pi_j}\\
&= \sum_{k=0}^{n-1}
\ell_{i}(\lambda_{k})\pi_{j}(\lambda_{k})\nu_{k}\\
&= 
\frac{1}{\|\mpi_n(\lambda_{i})\|_2} 
\frac{\pi_{j}(\lambda_{i})}{\|\mpi_n(\lambda_{i})\|_2}\\
&= \mQ_n(0,i)\mQ_n(j,i),
\end{align*}
which implies that $\mTheta = \Dq\mQ_n^T$. Therefore
\begin{align*}
\ip{\mQ_n\Dq^{-1}\vl_n\mpi_n^T} &=
\mQ_n\Dq^{-1}\ip{\vl_n\mpi_n^T}\\
&= \mQ_n\Dq^{-1}\mTheta\\
&= \mQ_n\Dq^{-1}\Dq\mQ_n^T\\
&= \mI_n,
\end{align*}
which completes the proof.
\end{proof}

Some refer to the pseudospectral
method explicitly as an interpolation method~\cite{Boyd01}.
See~\cite{Hesthaven07} for an insightful interpretation in terms of a discrete
projection. Because of this property, we will freely interchange the
collocation and pseudospectral approximations when convenient in the ensuing
analysis. 

The work required to compute the pseudospectral approximation is
highly dependent on the parameterized system. In general, we assume that the
computation of $x(\lambda_i)$ dominates the work; in other words, the cost
of computing Gaussian quadrature formulas is negligible compared to computing the
solution to each linear system. Then if each $x(\lambda_i)$ costs
$\mathcal{O}(N^3)$, the pseudospectral approximation with $n$ terms costs
$\mathcal{O}(nN^3)$. 

\subsection{Spectral Galerkin}

The spectral Galerkin method computes a finite dimensional approximation to
$x(s)$ such that each element of the equation residual is orthogonal to the
approximation space. Define 
\begin{equation}
\label{eq:resid}
r(y,s) = A(s)y(s)-b(s).
\end{equation}
The finite dimensional approximation space for each component $x_i(s)$ will be
the space of polynomials of degree at most $n-1$. This space is spanned by the
first $n$ orthonormal polynomials, i.e.
$\mathrm{span}(\pi_0(s),\dots,\pi_{n-1}(s))=\mathbb{P}_{n-1}$. We seek an
$\mathbb{R}^N$-valued polynomial $x_{g,n}(s)$ of maximum degree $n-1$ such that
\begin{equation}
\label{eq:orthoresid}
\ip{r_i(x_{g,n})\pi_k}=0,\qquad i=0,\dots,N-1,\qquad k=0,\dots,n-1,
\end{equation}
where $r_i(x_{g,n})$ is the $i$th component of the residual.
We can write equations \eqref{eq:orthoresid} in matrix notation as
\begin{equation}
\label{eq:matresid}
\ip{r(x_{g,n})\mpi_n^T} = \mathbf{0}
\end{equation}
or equivalently
\begin{equation}
\label{eq:varform}
\ip{Ax_{g,n}\mpi_n^T} = \ip{b\mpi_n^T}.
\end{equation}
Since each component of $x_{g,n}(s)$ is a polynomial of degree at most $n-1$, we
can write its expansion in $\{\pi_k(s)\}$ as
\begin{equation}
\label{eq:galerkin}
x_{g,n}(s) \;=\; \sum_{k=0}^{n-1}\vx_{g,k}\pi_k(s) \;\equiv\; \mX_g\mpi_n(s),
\end{equation} 
where $\mX_g$ is a constant matrix of size $N\times n$; the subscript $g$ is
for \emph{Galerkin}. Then equation \eqref{eq:varform} becomes
\begin{equation}
\label{eq:varform2}
\ip{A\mX_g\mpi_n\mpi_n^T} = \ip{b\mpi_n^T}.
\end{equation}
Using the vec notation~\cite[Section 4.5]{GVL96}, we can rewrite
\eqref{eq:varform2} as
\begin{equation}
\label{eq:galerkinsys}
\ip{\mpi_n\mpi_n^T\otimes A}\tvec(\mX_g)=\ip{\mpi_n\otimes b}.
\end{equation} 
where $\tvec(\mX_g)$ is an $Nn\times 1$ constant vector equal to the columns of
$\mX_g$ stacked on top of each other. The constant matrix
$\ip{\mpi_n\mpi_n^T\otimes A}$ has size $Nn\times Nn$ and a distinct block
structure; the $i,j$ block of size $N\times N$ is equal to $\ip{\pi_i\pi_j A}$.
More explicitly,
\begin{equation}
\ip{\mpi_n\mpi_n^T\otimes A}
=
\bmat{
\ip{\pi_0\pi_0 A} & \cdots & \ip{\pi_{0}\pi_{n-1} A} \\
\vdots & \ddots & \vdots\\
\ip{\pi_{n-1}\pi_0 A} & \cdots & \ip{\pi_{n-1}\pi_{n-1} A} 
}.
\end{equation}
Similarly, the $i$th block of the $Nn\times 1$ vector $\ip{\mpi_n\otimes b}$
is equal to $\ip{b\pi_i}$, which is exactly the $i$th Fourier coefficient of
$b(s)$.

Since $A(s)$ is bounded and nonsingular for all $s\in[-1,1]$, it is
straightforward to show that $x_{g,n}(s)$ exists and is unique using the
classical Galerkin theorems presented and summarized in Brenner and
Scott~\cite[Chapter 2]{Brenner02}. This implies that $\mX_g$ is unique, and
since $b(s)$ is arbitrary, we conclude that the matrix
$\ip{\mpi_n\mpi_n^T\otimes A}$ is nonsingular for all finite truncations $n$. 

The work required to compute the Galerkin approximation depends on how one
computes the integrals in equation \eqref{eq:galerkinsys}. If we assume that
the cost of forming the system is negligible, then the costly part of the
computation is solving the system \eqref{eq:galerkinsys}. The size of the
matrix $\ip{\mpi_n\mpi_n^T\otimes A}$ is $Nn\times Nn$, so we expect an
operation count of $\mathcal{O}(N^3n^3)$, in general. However, many
applications beget systems with sparsity or exploitable structure that can
considerably reduce the required work.

\subsection{Summary}

We have discussed two classes of spectral methods: (i) the
interpolatory pseudospectral method which approximates the truncated Fourier
series of $x(s)$ by using a Gaussian quadrature rule to approximate each Fourier
coefficient, and (ii) the Galerkin projection method which finds an
approximation in a finite dimensional subspace such that the residual
$A(s)x_{g,n}(s)-b(s)$ is orthogonal to the approximation space. In general, the
$n$-term pseudospectral approximation requires $n$ solutions of the original
parameterized matrix equation \eqref{eq:main} evaluated at the Gaussian quadrature
points, while the Galerkin method requires the solution of the
coupled linear system of equations \eqref{eq:galerkinsys} that is $n$ times as
large as the original parameterized matrix equation. A rough operation count
for the pseudospectral and Galerkin approximations is $\mathcal{O}(nN^3)$ and
$\mathcal{O}(n^3N^3)$, respectively. 

Before discussing asymptotic error estimates, we first derive some interesting
and useful connections between these two classes of methods. In particular, we
can interpret each method as a set of functions acting on the infinite Jacobi
matrix for the weight function $w(s)$; the difference between the methods lies
in where each truncates the infinite system of equations. 

\section{Connections Between Pseudospectral and Galerkin}
\label{sec:connections}
We begin with a useful lemma for 
representing a matrix of Gauss quadrature integrals in terms of functions of
the Jacobi matrix.

\begin{lemma}
\label{lem:jacpoly}
Let $f(s)$ be a scalar function analytic in a region containing $[-1,1]$. Then
$\ipm{f\mpi_n\mpi_n^T}{n} = f(\mJ_n)$.
\end{lemma}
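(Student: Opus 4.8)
The plan is to use the eigendecomposition $\mJ_n = \mQ_n\mL_n\mQ_n^T$ from \eqref{eq:eigJ} to reduce the claim to a statement about the diagonal matrix $\mL_n$ of Gauss nodes, and then exploit the fact that the entries of $\mpi_n(\lambda_i)$ are, up to the normalization $\mQ_n(0,i)$, exactly the columns of $\mQ_n$. First I would recall the two basic facts from the orthogonal-polynomial subsection: the columns of $\mQ_n$ are the normalized eigenvectors $\mpi_n(\lambda_i)/\|\mpi_n(\lambda_i)\|_2$, and the Gauss weight satisfies $\nu_i = \mQ_n(0,i)^2 = 1/\|\mpi_n(\lambda_i)\|_2^2$ (the latter equality being the standard Christoffel–Darboux identity $\|\mpi_n(\lambda_i)\|_2^{-2} = \nu_i$). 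Consequently $\sqrt{\nu_i}\,\mpi_n(\lambda_i) = \mQ_n(:,i)$, i.e.\ $\mQ_n = \mpi_n(\mL_n)\mathrm{diag}(\sqrt{\nu_0},\dots,\sqrt{\nu_{n-1}})$ in the obvious notation where $\mpi_n(\mL_n)$ has columns $\mpi_n(\lambda_i)$.

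Next I would expand the left-hand side directly using the definition \eqref{eq:gqnote} of the quadrature functional applied entrywise:
\begin{equation*}
\ipm{f\mpi_n\mpi_n^T}{n} = \sum_{i=0}^{n-1} f(\lambda_i)\,\nu_i\,\mpi_n(\lambda_i)\mpi_n(\lambda_i)^T
= \sum_{i=0}^{n-1} f(\lambda_i)\,\mQ_n(:,i)\mQ_n(:,i)^T,
\end{equation*}
where the second equality uses $\nu_i\,\mpi_n(\lambda_i)\mpi_n(\lambda_i)^T = \mQ_n(:,i)\mQ_n(:,i)^T$. The sum on the right is precisely $\mQ_n\, f(\mL_n)\,\mQ_n^T$, where $f(\mL_n) = \mathrm{diag}(f(\lambda_0),\dots,f(\lambda_{n-1}))$. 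Since $f$ is analytic in a region containing $[-1,1]$ and the eigenvalues $\{\lambda_i\}$ of $\mJ_n$ lie in $(-1,1)$, the matrix function $f(\mJ_n)$ is well-defined and satisfies $f(\mJ_n) = \mQ_n f(\mL_n)\mQ_n^T$. Combining the two displays gives $\ipm{f\mpi_n\mpi_n^T}{n} = f(\mJ_n)$, as required.

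The only real subtlety — and the step I would be most careful about — is justifying the identity $\nu_i = \|\mpi_n(\lambda_i)\|_2^{-2}$ that bridges ``quadrature weight'' and ``squared eigenvector normalization''; the excerpt gives $\mQ_n(0,i)^2 = \nu_i$ but the argument needs the full vector norm. This follows because $\mQ_n(:,i) = \mpi_n(\lambda_i)/\|\mpi_n(\lambda_i)\|_2$ is a \emph{unit} eigenvector, so its first component is $\mQ_n(0,i) = \pi_0(\lambda_i)/\|\mpi_n(\lambda_i)\|_2 = 1/\|\mpi_n(\lambda_i)\|_2$ (since $\pi_0 \equiv 1$), and squaring gives $\nu_i = 1/\|\mpi_n(\lambda_i)\|_2^2$. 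A secondary point worth a sentence is that analyticity of $f$ on a neighborhood of $[-1,1]$ is more than enough for $f(\mJ_n)$ to make sense and for the spectral-mapping formula $f(\mJ_n) = \mQ_n f(\mL_n)\mQ_n^T$ to hold, since $\mJ_n$ is symmetric with spectrum strictly inside $[-1,1]$; in fact continuity on the spectrum would suffice, but analyticity is the hypothesis we are handed and it costs nothing.
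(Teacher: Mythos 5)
Your proposal is correct and follows essentially the same route as the paper's proof: both rest on the eigendecomposition $\mJ_n=\mQ_n\mL_n\mQ_n^T$, the identification of the columns of $\mQ_n$ with the normalized vectors $\mpi_n(\lambda_i)/\|\mpi_n(\lambda_i)\|_2$, and the identity $\nu_i=\|\mpi_n(\lambda_i)\|_2^{-2}$ (which the paper uses implicitly and you justify explicitly via $\pi_0\equiv 1$). The only difference is presentational — you assemble the whole matrix at once while the paper verifies the $(i,j)$ entry — so the two arguments are the same in substance.
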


\begin{proof}
We examine the $i,j$ element of the $n\times n$ matrix $f(\mJ_n)$.
\begin{align*}
\ve_i^T f(\mJ_n) \ve_j &= \ve_i^T\mQ_n f(\mL_n) \mQ_n^T\ve_j\\
&= \vq_i^T f(\mL_n) \vq_j\\
&= \sum_{k=0}^{n-1} f(\lambda_k)
\frac{\pi_i(\lambda_{k})}{\|\mpi(\lambda_{k})\|_2}
\frac{\pi_j(\lambda_{k})}{\|\mpi(\lambda_{k})\|_2}\\
&= \sum_{k=0}^{n-1} f(\lambda_k) \pi_i(\lambda_k)\pi_j(\lambda_k)\nu_k^n\\
&=\ipm{f\pi_i\pi_j}{n},
\end{align*}
which completes the proof.
\end{proof}

Note that Lemma \ref{lem:jacpoly} generalizes Theorem 3.4 in~\cite{Golub94}.
With this in the arsenal, we can prove the following theorem
relating pseudospectral to Galerkin.

\begin{theorem}
\label{thm:pseudogalerkin}
The pseudospectral solution is equal to an approximation of the Galerkin
solution where each integral in equation \eqref{eq:galerkinsys} is
approximated by an $n$-point Gauss quadrature formula. In other words, $\mX_p$
solves
\begin{equation}
\label{eq:pseudospecsys}
\ipm{\mpi_n\mpi_n^T\otimes A}{n}\tvec(\mX_p) = \ipm{\mpi_n\otimes\vb}{n}.
\end{equation}
\end{theorem}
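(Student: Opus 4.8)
The plan is to verify \eqref{eq:pseudospecsys} block-row by block-row, using as the one nontrivial ingredient the fact, already established in Theorem~\ref{thm:pseudoequalcollocation}, that $x_{p,n}$ interpolates $x(s)$ at the Gauss nodes. Unwinding the Kronecker/vec structure exactly as was done for the Galerkin system \eqref{eq:galerkinsys} — recalling that $\tvec$ stacks columns and that the $i,j$ block of $\ipm{\mpi_n\mpi_n^T\otimes A}{n}$ is $\ipm{\pi_i\pi_j A}{n}$ — equation \eqref{eq:pseudospecsys} is equivalent to the $n$ block equations
\begin{equation*}
\sum_{j=0}^{n-1}\ipm{\pi_i\pi_j A}{n}\,\mX_p(:,j)\;=\;\ipm{b\pi_i}{n},\qquad i=0,\dots,n-1,
\end{equation*}
so it suffices to prove this for each fixed $i$. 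Note we only need $\mX_p$ to satisfy the system; nothing about invertibility of $\ipm{\mpi_n\mpi_n^T\otimes A}{n}$ is required.

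First I would expand the quadrature-approximated integral on the left using its definition as a weighted sum of point evaluations, $\ipm{\pi_i\pi_j A}{n}=\sum_{k=0}^{n-1}\pi_i(\lambda_k)\pi_j(\lambda_k)A(\lambda_k)\nu_k$, and interchange the two finite sums over $j$ and $k$. This collects the factor $\sum_{j=0}^{n-1}\pi_j(\lambda_k)\mX_p(:,j)=\mX_p\mpi_n(\lambda_k)=x_{p,n}(\lambda_k)$ by \eqref{eq:pseudospec}, so the left-hand side becomes $\sum_{k=0}^{n-1}\nu_k\,\pi_i(\lambda_k)\,A(\lambda_k)\,x_{p,n}(\lambda_k)$. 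Now I would invoke Theorem~\ref{thm:pseudoequalcollocation}, giving $x_{p,n}=x_{c,n}$, together with the interpolation property $x_{c,n}(\lambda_k)=x(\lambda_k)$ built into the collocation construction \eqref{eq:collocation}; since $x(\lambda_k)$ solves $A(\lambda_k)x(\lambda_k)=b(\lambda_k)$ by \eqref{eq:main}, the sum collapses to $\sum_{k=0}^{n-1}\nu_k\,\pi_i(\lambda_k)\,b(\lambda_k)=\ipm{b\pi_i}{n}$, which is precisely the $i$th block of the right-hand side of \eqref{eq:pseudospecsys}.

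I expect the only real obstacle to be organizational: keeping the block indexing of the Kronecker products aligned with the column indexing of $\mX_p$, and confirming that the discrete bracket $\ipm{\cdot}{n}$ — being a finite weighted sum of evaluations — passes through matrix--vector products and finite sums without difficulty. There is no delicate analytic content once the collocation interpretation of $\mX_p$ is available. As an alternative route, one could instead apply Lemma~\ref{lem:jacpoly} entrywise to the analytic scalar functions $A_{pq}(s)$ to rewrite $\ipm{\mpi_n\mpi_n^T\otimes A}{n}$ as a block matrix built from the matrices $A_{pq}(\mJ_n)$, but the node-by-node argument above is more transparent and avoids that bookkeeping.
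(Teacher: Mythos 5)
Your proposal is correct, but it is a genuinely different argument from the one in the paper. You verify \eqref{eq:pseudospecsys} block row by block row: unwinding the Kronecker/vec structure gives $\sum_{j}\ipm{\pi_i\pi_j A}{n}\mX_p(:,j)=\ipm{b\pi_i}{n}$, and after expanding the discrete bracket as a weighted sum over the nodes and swapping the finite sums, the key collapse $\mX_p\mpi_n(\lambda_k)=x_{p,n}(\lambda_k)=x_{c,n}(\lambda_k)=x(\lambda_k)$ together with $A(\lambda_k)x(\lambda_k)=b(\lambda_k)$ finishes the computation. Every step checks out (including your observation that you only need $\mX_p$ to \emph{satisfy} the system, not that the quadrature matrix be invertible), and the only inputs are Theorem~\ref{thm:pseudoequalcollocation} and the interpolation property built into \eqref{eq:collocation}. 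The paper instead argues globally: it writes the collocation equations as a block-diagonal system via the power series \eqref{eq:powerseries}, conjugates by $(\mQ_n\otimes\mI)(\Dq\otimes\mI)$ using the eigendecomposition \eqref{eq:eigJ} and Lemma~\ref{lem:basischange}, and then invokes Lemma~\ref{lem:jacpoly} to recognize the transformed operator as $\ipm{\mpi_n\mpi_n^T\otimes A}{n}$. Your node-based route is shorter, avoids the power-series expansion of $A(s)$ and the functional calculus on $\mJ_n$ altogether, and in that sense needs less of $A$ than analyticity; what the paper's heavier machinery buys is precisely the Jacobi-matrix formulation (the $A(\mJ_n)$ picture) that is reused immediately afterward in Corollary~\ref{cor:aj} and Theorem~\ref{thm:connections}, which your alternative remark about applying Lemma~\ref{lem:jacpoly} entrywise correctly identifies as the bridge between the two presentations.
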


\begin{proof}
Define the $N\times n$ matrix $\mB_c=[b(\lambda_0) \cdots b(\lambda_{n-1})]$.
Using the power series expansion of $A(s)$ (equation
\eqref{eq:powerseries}), we can write the matrix of each
collocation solution as
\begin{equation}
A(\lambda_k) = \sum_{i=0}^\infty\mA_i\lambda_k^i
\end{equation}
for $k=0,\dots,n-1$. We collect these into one large block-diagonal system by
writing 
\begin{equation}
\label{eq:pf1eq1}
\left(\sum_{i=0}^\infty\mL_n^i\otimes\mA_i\right)\tvec(\mX_c)
= 
\tvec(\mB_c).
\end{equation}
Let $\mI$ be the $N\times N$ identity matrix. Premultiply \eqref{eq:pf1eq1} by
$(\Dq\otimes\mI)$, and by commutativity of diagonal matrices and the mixed
product property, it becomes
\begin{equation}
\label{eq:pf1eq2}
\left(\sum_{i=0}^\infty\mL_n^i\otimes\mA_i\right)(\Dq\otimes\mI)\tvec(\mX_c)
= 
(\Dq\otimes\mI)\tvec(\mB_c).
\end{equation}
Premultiplying \eqref{eq:pf1eq2} by $(\mQ_n\otimes\mI)$, properly inserting
$(\mQ_n^T\otimes\mI)(\mQ_n\otimes\mI)$ on the left hand side, and
using the eigenvalue decomposition \eqref{eq:eigJ}, this becomes
\begin{equation}
\label{eq:pf1eq4}
\left(\sum_{i=0}^\infty\mJ_n^i\otimes\mA_i\right)(\mQ_n\otimes\mI)(\Dq\otimes\mI)\tvec(\mX_c)
= 
(\mQ_n\otimes\mI)(\Dq\otimes\mI)\tvec(\mB_c).
\end{equation}
But note that Lemma \ref{lem:basischange} implies
\begin{equation}
(\mQ_n\otimes\mI)(\Dq\otimes\mI)\tvec(\mX_c) = \tvec(\mX_p).
\end{equation} 
Using an argument identical to the proof of Lemma \ref{lem:basischange}, we can
write
\begin{equation}
(\mQ_n\otimes\mI)(\Dq\otimes\mI)\tvec(\mB_c) = \ipm{\mpi_n\otimes b}{n}
\end{equation}
Finally, using Lemma \ref{lem:jacpoly}, equation \eqref{eq:pf1eq4} becomes
\begin{equation}
\label{eq:pf1eq5}
\ipm{\mpi_n\mpi_n^T\otimes A}{n}\tvec(\mX_p)
= 
\ipm{\mpi_n\otimes b}{n}.
\end{equation}
as required.
\end{proof}

Theorem \ref{thm:pseudogalerkin} begets a corollary giving conditions for
equivalence between Galerkin and pseudospectral approximations.

\begin{corollary}
\label{cor:equiv}
If $b(s)$ contains only polynomials of maximum degree $m_b$ and $A(s)$ contains
only polynomials of maximum degree 1 (i.e. linear functions of $s$), then
$x_{g,n}(s)=x_{p,n}(s)$ for $n\geq m_b$ for all $s\in[-1,1]$. 
\end{corollary}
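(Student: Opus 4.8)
The plan is to show that, under the stated hypotheses, the $n$-point Gauss quadrature rule integrates \emph{every} integrand occurring in the Galerkin system \eqref{eq:galerkinsys} exactly, so that the approximate system \eqref{eq:pseudospecsys} produced by Theorem~\ref{thm:pseudogalerkin} is literally the same linear system as \eqref{eq:galerkinsys}. Uniqueness of the Galerkin solution (established right after \eqref{eq:galerkinsys}) then forces $\mX_p = \mX_g$, and since $x_{p,n}(s) = \mX_p\mpi_n(s)$ and $x_{g,n}(s) = \mX_g\mpi_n(s)$, the two approximations agree for all $s$. So I would start from Theorem~\ref{thm:pseudogalerkin}, which gives that $\mX_p$ solves $\ipm{\mpi_n\mpi_n^T\otimes A}{n}\tvec(\mX_p) = \ipm{\mpi_n\otimes\vb}{n}$, and then argue that each $\ipm{\cdot}{n}$ on both sides may be replaced by the exact integral $\ip{\cdot}$.

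The core of the argument is a degree count, carried out separately for the two sides. For the coefficient matrix, the $(i,j)$ block of $\ipm{\mpi_n\mpi_n^T\otimes A}{n}$ is $\ipm{\pi_i\pi_j A}{n}$ with $0 \le i,j \le n-1$; since each entry of $A(s)$ has degree at most $1$ and $\pi_i,\pi_j$ have degree at most $n-1$, the integrand $\pi_i(s)\pi_j(s)A(s)$ has entries that are polynomials of degree at most $(n-1)+(n-1)+1 = 2n-1$. The $n$-point Gauss rule has degree of exactness $2n-1$, so $\ipm{\pi_i\pi_j A}{n} = \ip{\pi_i\pi_j A}$ entrywise, i.e.\ $\ipm{\mpi_n\mpi_n^T\otimes A}{n} = \ip{\mpi_n\mpi_n^T\otimes A}$. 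For the right-hand side, the $i$th block of $\ipm{\mpi_n\otimes\vb}{n}$ is $\ipm{b\pi_i}{n}$ with $0 \le i \le n-1$; since each entry of $b(s)$ has degree at most $m_b$, the integrand $b(s)\pi_i(s)$ has entries of degree at most $m_b + n - 1$, and the hypothesis $n \ge m_b$ gives $m_b + n - 1 \le 2n-1$, so again the rule is exact and $\ipm{\mpi_n\otimes\vb}{n} = \ip{\mpi_n\otimes b}$.

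Combining these two observations, $\mX_p$ satisfies the exact Galerkin system \eqref{eq:galerkinsys}. That system has a nonsingular coefficient matrix, hence a unique solution, and $\mX_g$ satisfies it as well, so $\mX_p = \mX_g$; multiplying by $\mpi_n(s)$ gives $x_{p,n}(s) = x_{g,n}(s)$ for all $s \in [-1,1]$, as claimed. There is no real obstacle here beyond getting the bookkeeping right, but it is worth emphasizing where the hypotheses are used and that they are sharp: if any entry of $A(s)$ had degree $\ge 2$, the block $\ipm{\pi_{n-1}\pi_{n-1}A}{n}$ would involve an integrand of degree $\ge 2n$, beyond the degree of exactness of the $n$-point rule, and the pseudospectral and Galerkin systems would differ in general; likewise $n \ge m_b$ is exactly the condition needed to keep every $b\pi_i$ within degree $2n-1$. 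The only mild subtlety is the (routine) observation that the power-series and polynomial manipulations apply entrywise to matrix- and vector-valued integrands, which is already how the bracket notation has been used throughout.
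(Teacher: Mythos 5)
Your proposal is correct and follows essentially the same route as the paper's proof: a degree count showing the $n$-point Gauss rule integrates the entries of $\mpi_n\mpi_n^T\otimes A$ (degree at most $2n-1$) and $\mpi_n\otimes b$ (degree at most $n-1+m_b\le 2n-1$ when $n\ge m_b$) exactly, so the pseudospectral system of Theorem~\ref{thm:pseudogalerkin} coincides with the Galerkin system and uniqueness gives $\mX_p=\mX_g$. Your write-up is in fact slightly more careful than the paper's, which leaves the right-hand-side degree count implicit.
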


\begin{proof}
The parameterized matrix $\mpi_n(s)\mpi_n(s)^T\otimes A(s)$ has polynomials of
degree at most $2n-1$. Thus, by the polynomial exactness of the Gauss quadrature
formulas,
\begin{equation}
\ipm{\mpi_n\mpi_n^T\otimes A}{n} = \ip{\mpi_n\mpi_n^T\otimes A},
\qquad
\ipm{\mpi_n\otimes b}{n} = \ip{\mpi_n\otimes b}.
\end{equation}
Therefore $\mX_g = \mX_p$, and consequently
\begin{equation}
x_{g,n}(s)\;=\;\mX_g\mpi_n(s) \;=\; \mX_p\mpi_n(s)\;=\;x_{p,n}(s).
\end{equation}
as required.
\end{proof}

By taking the transpose of equation \eqref{eq:varform2} and following the
steps of the proof of theorem \ref{thm:pseudogalerkin}, we get another
interesting corollary.

\begin{corollary}
\label{cor:aj}
First define $A(\mJ_n)$ to be the $Nn\times Nn$ constant matrix with the $i,j$
block of size $n\times n$ equal to $A(i,j)(\mJ_n)$. Next define $b(\mJ_n)$ to be
the $Nn\times n$ constant matrix with the $i$th $n\times n$ block equal to
$b_i(\mJ_n)$. Then the pseudospectral coefficients $\mX_p$ satisfy
\begin{equation}
\label{eq:pseudospectranspose}
A(\mJ_n)\tvec(\mX_p^T)=b(\mJ_n)\ve_0,
\end{equation}
where $\ve_0=[1,0,\dots,0]^T$ is an $n$-vector.
\end{corollary}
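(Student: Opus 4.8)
The plan is to run the argument of Theorem~\ref{thm:pseudogalerkin} in the ``transposed'' arrangement, where the unknowns are the \emph{rows} of $\mX_p$ (equivalently, the approximate Fourier coefficients of each scalar component $x_{p,i}$) rather than its columns --- this is what taking the transpose of \eqref{eq:varform2} amounts to. I would begin with the componentwise collocation equations $\sum_{j=0}^{N-1}A_{ij}(\lambda_k)\,x_j(\lambda_k)=b_i(\lambda_k)$ for $i=0,\dots,N-1$ and $k=0,\dots,n-1$, expand $A(\lambda_k)=\sum_{\ell=0}^{\infty}\mA_\ell\lambda_k^\ell$, and collect them into the single block-diagonal system $(\sum_{\ell=0}^{\infty}\mA_\ell\otimes\mL_n^\ell)\tvec(\mX_c^T)=\tvec(\mB_c^T)$, with $\mB_c=[b(\lambda_0)\ \cdots\ b(\lambda_{n-1})]$ as in the theorem. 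The point is that the $N$-index now sits in the outer Kronecker slot: the $i$th length-$n$ block of $\tvec(\mX_c^T)$ is the $i$th row of $\mX_c$, namely $[x_i(\lambda_0),\dots,x_i(\lambda_{n-1})]^T$, and the $(i,j)$ block of the system matrix is the diagonal matrix $A_{ij}(\mL_n)$, so row $k$ of block $i$ reproduces exactly the collocation equation above.

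Next I would apply the same diagonalization as in the proof of Theorem~\ref{thm:pseudogalerkin}, but with the orthogonal and diagonal factors placed in the $\mL_n$ slot of the Kronecker products. Premultiplying by $(\mI_N\otimes\Dq)$ and then by $(\mI_N\otimes\mQ_n)$ --- commuting the diagonal $\Dq$ past each $\mL_n^\ell$ and inserting $(\mI_N\otimes\mQ_n^T)(\mI_N\otimes\mQ_n)=\mI_{Nn}$ between the system matrix and the unknown vector, exactly as in the theorem --- the system matrix becomes $\sum_\ell\mA_\ell\otimes\mQ_n\mL_n^\ell\mQ_n^T=\sum_\ell\mA_\ell\otimes\mJ_n^\ell$ by \eqref{eq:eigJ}, whose $(i,j)$ block is $\sum_\ell(\mA_\ell)_{ij}\mJ_n^\ell=A_{ij}(\mJ_n)$; this is precisely the matrix denoted $A(\mJ_n)$ in the statement. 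On the unknown side, transposing Lemma~\ref{lem:basischange} to the form $\mX_p^T=\mQ_n\Dq\mX_c^T$ gives $(\mI_N\otimes\mQ_n\Dq)\tvec(\mX_c^T)=\tvec(\mX_p^T)$, so after the transformation the unknown vector is exactly $\tvec(\mX_p^T)$.

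It remains to identify the transformed right-hand side $(\mI_N\otimes\mQ_n\Dq)\tvec(\mB_c^T)$. I would compute its $i$th block $\mQ_n\Dq[b_i(\lambda_0),\dots,b_i(\lambda_{n-1})]^T$ entrywise: using $\Dq(\ell,\ell)=\mQ_n(0,\ell)$, $\mQ_n(k,\ell)=\pi_k(\lambda_\ell)/\|\mpi_n(\lambda_\ell)\|_2$, and $\nu_\ell=\mQ_n(0,\ell)^2$ --- the same computation that appears in the proof of Lemma~\ref{lem:basischange} --- the $k$th entry collapses to $\sum_\ell\pi_k(\lambda_\ell)\,b_i(\lambda_\ell)\,\nu_\ell=\ipm{b_i\pi_k}{n}$. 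Since $b_i$ is analytic in a region containing $[-1,1]$ (a consequence of the analyticity of $b$), Lemma~\ref{lem:jacpoly} gives $\ve_k^T b_i(\mJ_n)\ve_0=\ipm{b_i\pi_k\pi_0}{n}=\ipm{b_i\pi_k}{n}$ because $\pi_0\equiv 1$; hence the $i$th block equals $b_i(\mJ_n)\ve_0$ and the stacked right-hand side is $b(\mJ_n)\ve_0$. Combining the three identifications yields $A(\mJ_n)\tvec(\mX_p^T)=b(\mJ_n)\ve_0$. (Alternatively, one may simply observe that this row-wise system is the image under the vec-permutation of the column-wise Gauss-approximated Galerkin system of Theorem~\ref{thm:pseudogalerkin}, already known to be solved by $\mX_p$, and then rewrite its system matrix and right-hand side using Lemma~\ref{lem:jacpoly}.)

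I expect the only real friction to be bookkeeping with the two Kronecker orderings --- Theorem~\ref{thm:pseudogalerkin} uses $\mL_n^\ell\otimes\mA_\ell$ acting on stacked columns, whereas here $\mA_\ell\otimes\mL_n^\ell$ acts on stacked rows --- together with making sure the blocks of $A(\mJ_n)$ and $b(\mJ_n)$ come out in the order fixed in the statement. There is no analytic subtlety beyond what is already handled: the eigenvalues of $\mJ_n$ (the Gauss nodes) lie in $[-1,1]$, where $A$ and $b$ are analytic, so the power series converge and Lemma~\ref{lem:jacpoly} applies verbatim.
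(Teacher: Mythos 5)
Your proposal is correct and follows essentially the same route as the paper, which gives no separate proof for this corollary beyond the remark that it follows by transposing \eqref{eq:varform2} and repeating the steps of Theorem~\ref{thm:pseudogalerkin}; your write-up simply carries out that transposed argument in detail (block-diagonal collocation system in the ordering $\mA_\ell\otimes\mL_n^\ell$, diagonalization via \eqref{eq:eigJ}, Lemma~\ref{lem:basischange} for the unknowns, and Lemma~\ref{lem:jacpoly} with $\pi_0\equiv 1$ for the right-hand side). The bookkeeping checks out, so no gaps to report.
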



Theorem \ref{thm:pseudogalerkin} leads to a fascinating connection between the
matrix operators in the Galerkin and pseudospectral methods, namely that the
matrix in the Galerkin system is equal to a submatrix of the matrix from a
sufficiently larger pseudospectral computation. This is the key to
understanding the relationship between the Galerkin and pseudospectral
approximations. In the following
lemma, we denote the first $r\times r$ principal minor of a matrix $\mM$ by
$[\mM]_{r\times r}$.

\begin{lemma}
\label{lem:galerkinsubmat}
Let $A(s)$ contain only polynomials of degree at most $m_a$, and let $b(s)$
contain only polynomials of degree at most $m_b$. Define
\begin{equation}
\label{eq:mdef}
m\equiv m(n)\geq
\max\left(
\left\lceil\frac{m_a+2n-1}{2}\right\rceil
\;,\;
\left\lceil\frac{m_b+n}{2}\right\rceil
\right)
\end{equation}
Then
\begin{align*}
\ip{\mpi_n\mpi_n^T\otimes A} &= \submat{\ipm{\mpi_m\mpi_m^T\otimes
A}{m}}{Nn\times Nn}\\
\ip{\mpi_n\otimes b} &= \submat{\ipm{\mpi_m\otimes b}{m}}{Nn\times 1}.
\end{align*}
\end{lemma}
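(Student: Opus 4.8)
The plan is to prove both identities block by block, leaning entirely on the explicit block structure of the Kronecker products recalled just above the lemma and on the degree of exactness of Gaussian quadrature. First I would check that the definition of $m$ forces $m\ge n$, so that the $Nn\times Nn$ principal minor of the $Nm\times Nm$ matrix $\ipm{\mpi_m\mpi_m^T\otimes A}{m}$ is even meaningful: since $m_a\ge 0$ we have $\lceil(m_a+2n-1)/2\rceil\ge\lceil(2n-1)/2\rceil=n$, hence $m\ge n$. Because $\mpi_n(s)$ is exactly the vector of the first $n$ components of $\mpi_m(s)$, the $(i,j)$ block of size $N\times N$ of $\submat{\ipm{\mpi_m\mpi_m^T\otimes A}{m}}{Nn\times Nn}$ is precisely $\ipm{\pi_i\pi_j A}{m}$ for $i,j=0,\dots,n-1$, and the $i$th block of $\submat{\ipm{\mpi_m\otimes b}{m}}{Nn\times 1}$ is $\ipm{\pi_i b}{m}$ for $i=0,\dots,n-1$. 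I would record this as the bookkeeping step that reduces everything to comparing individual blocks against the corresponding blocks of $\ip{\mpi_n\mpi_n^T\otimes A}$ and $\ip{\mpi_n\otimes b}$.

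Next I would carry out the degree count. For $i,j\le n-1$, every entry of $\pi_i(s)\pi_j(s)A(s)$ is a polynomial in $s$ of degree at most $(n-1)+(n-1)+m_a=m_a+2n-2$, which is $\le 2m-1$ precisely because $m\ge\lceil(m_a+2n-1)/2\rceil$; since the $m$-point Gauss rule integrates $\mathbb{P}_{2m-1}$ exactly, $\ipm{\pi_i\pi_j A}{m}=\ip{\pi_i\pi_j A}$ entrywise, i.e.\ the $(i,j)$ block above coincides with the $(i,j)$ block of the Galerkin matrix. Similarly, for $i\le n-1$ each entry of $\pi_i(s)b(s)$ has degree at most $(n-1)+m_b=m_b+n-1\le 2m-1$ by $m\ge\lceil(m_b+n)/2\rceil$, so $\ipm{\pi_i b}{m}=\ip{\pi_i b}$. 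Reassembling the blocks yields both claimed equalities.

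I do not expect a genuine obstacle here; the substance is just the observation that the choice of $m$ makes $2m-1$ dominate both degree bounds (and in particular makes $m\ge n$, so the principal-minor statement has content). The only things requiring a little care are the ceiling arithmetic and the elementary fact that selecting the first $n$ block rows and columns of a Kronecker product $\mC\otimes A$ returns $\submat{\mC}{n\times n}\otimes A$; both are immediate from the block descriptions already stated in the paper.
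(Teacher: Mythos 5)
Your argument is correct and follows essentially the same route as the paper's proof: a degree count showing that every integrand in the Galerkin matrix (respectively right-hand side) has degree at most $m_a+2n-2$ (respectively $m_b+n-1$), which the choice of $m$ puts within the $2m-1$ degree of exactness of the $m$-point Gauss rule. Your additional bookkeeping (checking $m\geq n$ and identifying the blocks of the principal minor) merely makes explicit what the paper leaves implicit.
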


\begin{proof}
The integrands of the matrix $\ip{\mpi_n\mpi_n^T\otimes A}$ are polynomials of
degree at most $2n+m_a-2$. Therefore they can be integrated exactly with a Gauss
quadrature rule of order $m$. A similar argument holds for $\ip{\mpi_n\otimes
b}$.
\end{proof}

Combining Lemma \ref{lem:galerkinsubmat} with corollary \ref{cor:aj}, we get
the following proposition relating the Galerkin coefficients to the Jacobi
matrices for $A(s)$ and $b(s)$ that depend polynomially on
$s$.

\begin{proposition}
\label{prop:submatsys}
Let $m$, $m_a$, and $m_b$ be defined as in Lemma \ref{lem:galerkinsubmat}.
Define $[A]_n(\mJ_m)$ to be the $Nn\times Nn$ constant matrix with the $i,j$
block of size $n\times n$ equal to $[A(i,j)(\mJ_m)]_{n\times n}$ for
$i,j=0,\dots,N-1$. 
Define $[b]_n(\mJ_m)$ to be the $Nn\times n$ constant matrix
with the $i$th $n\times n$ block equal to $[b_i(\mJ_m)]_{n\times n}$ for
$i=1,\dots,N$. Then the Galerkin coefficients $\mX_g$ satisfy
\begin{equation}
[A]_n(\mJ_m)\tvec(\mX_g^T)=[b]_n(\mJ_m)\ve_0,
\end{equation}
where $\ve_0=[1,0,\dots,0]^T$ is an $n$-vector.
\end{proposition}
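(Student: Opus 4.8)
The plan is to carry out exactly the combination advertised in the sentence preceding the statement. I would start from the Galerkin system in the Kronecker form \eqref{eq:galerkinsys}, $\ip{\mpi_n\mpi_n^T\otimes A}\tvec(\mX_g) = \ip{\mpi_n\otimes b}$, and apply Lemma \ref{lem:galerkinsubmat}: the coefficient matrix equals the leading $Nn\times Nn$ principal minor of $\ipm{\mpi_m\mpi_m^T\otimes A}{m}$ and the right-hand side equals the leading $Nn\times 1$ subvector of $\ipm{\mpi_m\otimes b}{m}$, with the two ceilings inside the $\max$ in \eqref{eq:mdef} being precisely the exactness thresholds that make these two identities hold. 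By Theorem \ref{thm:pseudogalerkin} applied at truncation level $m$ (rather than $n$), the matrix $\ipm{\mpi_m\mpi_m^T\otimes A}{m}$ and the vector $\ipm{\mpi_m\otimes b}{m}$ are exactly the coefficient matrix and right-hand side of the $m$-term pseudospectral system.

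Next I would invoke the transposed/shuffled form of that pseudospectral system, i.e.\ Corollary \ref{cor:aj} at level $m$. Let $\mP$ denote the perfect-shuffle permutation with $\mP\,\tvec(\mZ) = \tvec(\mZ^{T})$ for $N\times m$ matrices $\mZ$; the blockwise rewriting already performed in the proofs of Lemma \ref{lem:jacpoly} and Corollary \ref{cor:aj} (using $\pi_0\equiv 1$ for the right-hand side) gives $\mP\,\ipm{\mpi_m\mpi_m^T\otimes A}{m}\,\mP^T = A(\mJ_m)$ and $\mP\,\ipm{\mpi_m\otimes b}{m} = b(\mJ_m)\ve_0$. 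The one point that needs care is what $\mP$ does to the leading $Nn$ coordinates: in the degree-major layout behind the $\otimes$ notation these are the coordinates ``all polynomial degrees $<n$, all $N$ vector components,'' and $\mP$ maps this index set to itself, but in the component-major layout behind $A(\mJ_m)$ it is no longer a leading block — within each of the $N$ diagonal $m\times m$ blocks $A_{ij}(\mJ_m)$ it is the leading $n\times n$ principal minor, and within each length-$m$ block of $b(\mJ_m)\ve_0$ it is the leading $n$ entries. Hence conjugating the leading $Nn\times Nn$ minor by the induced shuffle produces exactly $[A]_n(\mJ_m)$, reordering the leading $Nn$ subvector produces $[b]_n(\mJ_m)\ve_0$, and $\mP$ turns $\tvec(\mX_g)$ into $\tvec(\mX_g^{T})$; applying $\mP$ to the whole Galerkin system therefore yields $[A]_n(\mJ_m)\tvec(\mX_g^{T}) = [b]_n(\mJ_m)\ve_0$.

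I expect the only genuine obstacle to be the index bookkeeping in the previous paragraph — making fully explicit that the perfect shuffle commutes with ``take the leading principal minor'' in the sense that the leading minor of $\mP G\mP^{T}$ over the shuffled index set equals the blockwise leading minor of $G$ — since everything else (exactness of the $m$-point rule on the relevant integrands, and the identification of Gauss-quadrature moment matrices with functions of $\mJ_m$) is already packaged in Lemmas \ref{lem:galerkinsubmat} and \ref{lem:jacpoly}. As an alternative that avoids the shuffle entirely, one could instead transpose the variational form \eqref{eq:varform2} and push it through the identity $\tvec(XYZ) = (Z^{T}\otimes X)\tvec(Y)$ to arrive directly at $\ip{A\otimes\mpi_n\mpi_n^{T}}\tvec(\mX_g^{T})$ in component-major form, then evaluate each $n\times n$ block $\ip{A_{ij}\mpi_n\mpi_n^{T}}$ (with $A_{ij}$ the $(i,j)$ entry of $A$, of degree at most $m_a$) by noting its integrand has degree at most $m_a+2(n-1)\le 2m-1$, so the $m$-point rule is exact and the block equals $[\ipm{A_{ij}\mpi_m\mpi_m^{T}}{m}]_{n\times n} = [A_{ij}(\mJ_m)]_{n\times n}$ by Lemma \ref{lem:jacpoly}; the right-hand side blocks $\ip{b_i\mpi_n}$ are handled identically using the degree bound $m_b+(n-1)\le 2m-1$ together with $\pi_0\equiv 1$.
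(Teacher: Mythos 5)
Your proposal is correct and follows essentially the route the paper intends: the paper gives no written proof beyond the remark that the proposition follows by ``combining Lemma \ref{lem:galerkinsubmat} with Corollary \ref{cor:aj},'' and your main argument is precisely that combination, with the perfect-shuffle bookkeeping (the degree-major leading $Nn\times Nn$ principal minor corresponds, after reordering, to the blockwise leading $n\times n$ minors in component-major order, and $\tvec(\mX_g)$ becomes $\tvec(\mX_g^T)$) correctly made explicit where the paper leaves it implicit. Your alternative direct verification --- transposing \eqref{eq:varform2}, checking the degree bounds $m_a+2n-2\le 2m-1$ and $m_b+n-1\le 2m-1$ against the exactness of the $m$-point rule, and invoking Lemma \ref{lem:jacpoly} entrywise together with $\pi_0\equiv 1$ --- is equally valid and reaches the same conclusion without the permutation argument.
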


Notice that Proposition \ref{prop:submatsys} provides a way to compute the
exact matrix for the Galerkin computation without any symbolic
manipulation, but beware that $m$ depends on both $n$ and the largest degree of
polynomial in $A(s)$. Written in this form, we have no trouble taking $m$ to
infinity, and we arrive at the main theorem of this section.

\begin{theorem}
\label{thm:connections}
Using the notation of Proposition \ref{prop:submatsys} and corollary
\ref{cor:aj}, the coefficients $\mX_g$ of the $n$-term Galerkin approximation of
the solution $x(s)$ to equation \eqref{eq:main} satisfy the linear system of
equations
\begin{equation}
[A]_n(\mJ_\infty)\tvec(\mX_g^T)=[b]_n(\mJ_\infty)\ve_0,
\end{equation}
where $\ve_0=[1,0,\dots,0]^T$ is an $n$-vector.
\end{theorem}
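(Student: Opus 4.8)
The plan is to obtain the statement by letting $m\to\infty$ in Proposition~\ref{prop:submatsys}. The real content of the theorem is that the leading $Nn\times Nn$ blocks $[A]_n(\mJ_m)$ and $[b]_n(\mJ_m)$ are eventually constant in $m$, so that the family of identities $[A]_n(\mJ_m)\tvec(\mX_g^T)=[b]_n(\mJ_m)\ve_0$ furnished by Proposition~\ref{prop:submatsys} collapses to a single limiting identity, which is exactly what we denote $[A]_n(\mJ_\infty)\tvec(\mX_g^T)=[b]_n(\mJ_\infty)\ve_0$. So the first step is to make precise what $[A]_n(\mJ_\infty)$ and $[b]_n(\mJ_\infty)$ mean, and the second is to invoke Proposition~\ref{prop:submatsys} for a sufficiently large admissible $m$.

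For the stabilization, the key point is that the recurrence coefficients $\alpha_k,\beta_k$ in \eqref{eq:3term} do not depend on any truncation level, so $\mJ_n$ is the leading $n\times n$ principal submatrix of $\mJ_m$ for every $m\ge n$; likewise we may regard $\mJ_\infty$ as the formal semi-infinite tridiagonal array built from these same coefficients. Because $\mJ_m$ is tridiagonal, the $(i,j)$ entry of $\mJ_m^k$ is a sum over nearest-neighbor walks of length $k$ from $i$ to $j$, and a walk starting at an index $i\le n-1$ cannot leave $\{0,\dots,n-1+k\}$. Hence the leading $n\times n$ block $[\mJ_m^k]_{n\times n}$ depends only on the entries of $\mJ_m$ whose indices are both at most $n-1+k$, and these entries agree for all $m\ge n+k$. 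Since by hypothesis each scalar entry $A(i,j)(s)$ is a polynomial of degree at most $m_a$ and each $b_i(s)$ a polynomial of degree at most $m_b$, the blocks $[A(i,j)(\mJ_m)]_{n\times n}$ and $[b_i(\mJ_m)]_{n\times n}$ are fixed linear combinations of $[\mJ_m^0]_{n\times n},\dots,[\mJ_m^{m_a}]_{n\times n}$ and of $[\mJ_m^0]_{n\times n},\dots,[\mJ_m^{m_b}]_{n\times n}$ respectively, hence independent of $m$ as soon as $m\ge n+\max(m_a,m_b)$. We take these common values, assembled blockwise exactly as in Proposition~\ref{prop:submatsys}, to be the definitions of $[A]_n(\mJ_\infty)$ and $[b]_n(\mJ_\infty)$.

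Finally, the defining inequality \eqref{eq:mdef} only bounds $m=m(n)$ from below, so we are free to choose $m$ large enough that in addition $m\ge n+\max(m_a,m_b)$. For this $m$, Proposition~\ref{prop:submatsys} gives $[A]_n(\mJ_m)\tvec(\mX_g^T)=[b]_n(\mJ_m)\ve_0$, and by the preceding paragraph $[A]_n(\mJ_m)=[A]_n(\mJ_\infty)$ and $[b]_n(\mJ_m)=[b]_n(\mJ_\infty)$, which yields the claimed system.

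I expect the only genuine obstacle to be the bandedness bookkeeping in the second step: one must be careful to argue that $[\mJ_m^k]_{n\times n}$ stabilizes, rather than mistakenly claiming $[\mJ_m^k]_{n\times n}=\mJ_n^k$, which is false on account of boundary effects and is precisely why the theorem is not vacuous. Everything else is a direct appeal to results already in hand.
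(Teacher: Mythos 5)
There is a genuine gap: your argument silently adds the hypothesis that each entry $A(i,j)(s)$ and $b_i(s)$ is a polynomial in $s$ (``Since by hypothesis each scalar entry \dots is a polynomial of degree at most $m_a$\dots''), but Theorem~\ref{thm:connections} carries no such hypothesis --- it concerns the general problem \eqref{eq:main}, where $A(s)$ and $b(s)$ are merely analytic with power series \eqref{eq:powerseries} (cf.\ the second numerical example, where $A(s)=K_0+\cos(\pi s)K_1$). Under the polynomial assumption the theorem is essentially a restatement of Proposition~\ref{prop:submatsys}, and indeed that is all your proof delivers. The actual content of the theorem is the removal of the polynomial-dependence restriction, and the paper's proof is built around exactly that: truncate the power series to $A^{(m_a)}(s)$, $b^{(m_b)}(s)$; note that for $m_a$ large enough $A^{(m_a)}(s)$ stays uniformly nonsingular on $[-1,1]$, so the truncated Galerkin problem is well posed; apply Proposition~\ref{prop:submatsys} to this polynomial problem and let $m\to\infty$ (harmless, since the principal blocks are independent of admissible $m$); and then let $m_a,m_b\to\infty$, using $[A^{(m_a)}]_n(\mJ_\infty)\to[A]_n(\mJ_\infty)$, $[b^{(m_b)}]_n(\mJ_\infty)\to[b]_n(\mJ_\infty)$ and the resulting convergence $\mX_g^{(m_a,m_b)}\to\mX_g$ of the Galerkin coefficients. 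None of this limiting structure --- well-posedness of the truncated problems, convergence of the matrices and of the Galerkin solutions, and the meaning of $[A]_n(\mJ_\infty)$ for non-polynomial analytic entries --- appears in your proposal, so the theorem as stated is not proved.

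What you do have is correct and worth keeping: the bandedness/walk argument showing that $\submat{\mJ_m^k}{n\times n}$ stabilizes for $m\ge n+k$ (and is \emph{not} equal to $\mJ_n^k$) is a clean way to see that $[A]_n(\mJ_m)$ and $[b]_n(\mJ_m)$ are eventually constant in $m$ in the polynomial case, which is the step the paper passes over with ``we can take $m\rightarrow\infty$ without changing the solution at all.'' (The paper's own route to the same stabilization is Lemma~\ref{lem:galerkinsubmat} combined with Lemma~\ref{lem:jacpoly}: the principal block equals the exact-integral Galerkin matrix $\ip{\mpi_n\mpi_n^T\otimes A}$ for every admissible $m$, hence is $m$-independent.) To complete the proof you would graft your paragraph onto the paper's outer limiting argument in $m_a,m_b$, rather than stopping at the polynomial case.
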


\begin{proof}
Let $A^{(m_a)}(s)$ be the truncated power series of $A(s)$ up to order $m_a$,
and let $b^{(m_b)}(s)$ be the truncated power series of $b(s)$ up to order
$m_b$. Since $A(s)$ is analytic and bounded away from singularity for all
$s\in[-1,1]$, there exists an integer $M$ such that $A^{(m_a)}(s)$ is also
bounded away from singularity for all $s\in[-1,1]$ and all $m_a>M$ (although
the bound may be depend on $m_a$). Assume that $m_a>M$. 

Define $m$ as in equation \eqref{eq:mdef}. Then by Proposition
\ref{prop:submatsys}, the coefficients $\mX_g^{(m_a,m_b)}$ of the $n$-term
Galerkin approximation to the solution of the truncated system satisfy
\begin{equation}
\label{eq:trunceq}
[A^{(m_a)}]_n(\mJ_m)\tvec((\mX_g^{(m_a,m_b)})^T)=[b^{(m_b)}]_n(\mJ_m)\ve_0.
\end{equation} 
By the definition of $m$ (equation \eqref{eq:mdef}), equation
\eqref{eq:trunceq} holds for all integers greater than some minimum value.
Therefore, we can take $m\rightarrow\infty$ without changing the solution at
all, i.e.
\begin{equation}
[A^{(m_a)}]_n(\mJ_\infty)\tvec((\mX_g^{(m_a,m_b)})^T)=[b^{(m_b)}]_n(\mJ_\infty)\ve_0.
\end{equation}
Next we take $m_a,m_b\rightarrow\infty$ to get
\begin{align*}
[A^{(m_a)}]_n(\mJ_\infty) &\rightarrow [A]_n(\mJ_\infty)\\
[b^{(m_b)}]_n(\mJ_\infty) &\rightarrow [b]_n(\mJ_\infty)
\end{align*}
which implies
\begin{equation}
\mX_g^{(m_a,m_b)} \rightarrow \mX_g
\end{equation}
as required.
\end{proof}

Theorem \ref{thm:connections} and corollary \ref{cor:aj} reveal the fundamental
difference between the Galerkin and pseudospectral approximations. We put them
side-by-side for comparison.
\begin{equation}
\label{eq:comparison}
[A]_n(\mJ_\infty)\tvec(\mX_g^T)=b_n(\mJ_\infty)\ve_0,\qquad
A(\mJ_n)\tvec(\mX_p^T)=b(\mJ_n)\ve_0.
\end{equation}
The difference lies in where the truncation occurs. For pseudospectral, the
infinite Jacobi matrix is first truncated, and then the operator is applied.
For Galerkin, the operator is applied to the infinite Jacobi matrix, and the
resulting system is truncated. The question that remains is whether it matters.
As we will see in the error estimates in the next section, the interpolating
pseudospectral approximation converges at a rate comparable to the Galerkin
approximation, yet requires considerably less computational effort. 

\section{Error Estimates}
\label{sec:error}
Asymptotic error estimates for polynomial approximation are well-established in
many contexts, and the theory is now considered classical. Our goal is to apply
the classical theory to relate the rate of geometric convergence to some
measure of singularity for the solution. We do not seek the tightest bounds in
the most appropriate norm as in~\cite{Canuto06}, but instead we offer
intuition for understanding the asymptotic rate of convergence. 
We also present a residual error estimate that may be more useful in practice. 
We complement
the analysis with two representative numerical examples.

To discuss convergence, we need to choose a norm. In the statements and
proofs, we will use the standard $L^2$ and $L^\infty$ norms generalized to
$\mathbb{R}^N$-valued functions. 

\begin{definition}
\label{def:norm}
For a function $f:\mathbb{R}\rightarrow\mathbb{R}^N$, define the $L^2$ and
$L^\infty$ norms as
\begin{align}
\label{eq:norms}
\norm{f}{L^2} &:= \sqrt{\sum_{i=0}^{N-1} \int_{-1}^{1}f_i^2(s)w(s)\,ds}\\
\norm{f}{L^\infty} &:= \max_{0\leq i\leq N-1}\left(\sup_{-1\leq s\leq 1}
|f_i(s)|\right) 
\end{align}
\end{definition}

With these norms, we can state error estimates for both Galerkin and
pseudospectral methods.

\begin{theorem}[Galerkin Asymptotic Error Estimate]
\label{thm:galerkinerr}
Let $\rho^\ast$ be the sum of the semi-axes of the greatest ellipse with foci
at $\pm 1$ in which $x_i(s)$ is analytic for $i=0,\dots,N-1$. Then for
$1<\rho<\rho^\ast$, the asymptotic error in the Galerkin approximation is
\begin{equation}
\label{eq:galerkinerr}
\norm{x-x_{g,n}}{L^2} \leq C\rho^{-n},
\end{equation} 
where $C$ is a constant independent of $n$. 
\end{theorem}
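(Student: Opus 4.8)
The plan is to leverage the $L^2$ best-approximation property established in equation \eqref{eq:bestapprox} together with a classical estimate for polynomial approximation of functions analytic inside a Bernstein ellipse. Since the Galerkin approximation $x_{g,n}$ lies in $\mathbb{P}_{n-1}^N$ but need not be the $L^2$-optimal polynomial $P_n x$, the first step is to convert the quasi-optimality of the Galerkin projection into a statement comparing $\norm{x - x_{g,n}}{L^2}$ to $\inf_{p \in \mathbb{P}_{n-1}^N}\norm{x-p}{L^2}$ up to a constant depending on the conditioning of $A(s)$ (boundedness and bounded-away-from-singularity on $[-1,1]$, as assumed); this is the standard C\'ea-type lemma, and the relevant Galerkin machinery from Brenner and Scott~\cite{Brenner02} was already invoked to establish existence and uniqueness, so I would cite it again here for the stability bound. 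Applying this componentwise (or directly in the $\mathbb{R}^N$-valued norm of Definition~\ref{def:norm}) reduces the problem to bounding the best $L^2$ polynomial approximation error of each analytic scalar component $x_i(s)$.

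Next I would recall the classical result: if $g$ is analytic inside the ellipse $E_\rho$ with foci $\pm 1$ and sum of semi-axes $\rho$, then the $n$th Chebyshev (or Legendre, or any fixed Jacobi-weight orthonormal-polynomial) truncation satisfies a bound of the form $\norm{g - P_n g}{L^\infty([-1,1])} \leq C_\rho\, \rho^{-n}$, and hence also in the weighted $L^2$ norm since $w$ integrates to $1$ and $L^\infty \hookrightarrow L^2$ here. Because $x_i(s)$ is analytic in a region containing $[-1,1]$ (shown via Cramer's rule, equation \eqref{eq:cramer}), for any $\rho < \rho^\ast$ the function $x_i$ is analytic on the closed ellipse $E_\rho$, so the hypothesis of the classical estimate is met with a constant $C_{i,\rho}$ depending on $\rho$ and on $\max_{E_\rho}|x_i|$ but not on $n$. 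Combining over $i = 0,\dots,N-1$ gives $\inf_{p\in\mathbb{P}_{n-1}^N}\norm{x-p}{L^2} \leq \norm{x - P_n x}{L^2} \leq \tilde C \rho^{-n}$, and folding in the stability constant from the first step yields $\norm{x - x_{g,n}}{L^2} \leq C \rho^{-n}$ with $C = C(\rho)$ independent of $n$, as claimed.

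I would be careful about two bookkeeping points. First, the orthonormal polynomials here are associated with an arbitrary weight $w$, not necessarily Chebyshev or Legendre; the geometric decay of coefficients for functions analytic in $E_\rho$ is a general fact for any such family (it follows, e.g., from writing the Fourier coefficient $\ip{x_i \pi_k}$ as a contour integral over $E_\rho$ and bounding $|\pi_k|$ on the ellipse, or by comparing to the Chebyshev case), so I would state it as a known result and cite a standard reference rather than re-deriving it. Second, I should make sure the $\rho$ in the statement is the one for which \emph{all} components $x_i$ are simultaneously analytic, i.e. $\rho^\ast = \min_i \rho_i^\ast$; taking $\rho < \rho^\ast$ then handles every component at once.

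The main obstacle is the first step: making the C\'ea/quasi-optimality argument rigorous in the $\mathbb{R}^N$-valued, parameter-dependent setting. The bilinear form $(y,z) \mapsto \ip{z^T A y}$ is bounded (since $A$ is bounded on $[-1,1]$) but not symmetric and not obviously coercive — coercivity would require something like $A(s) + A(s)^T$ uniformly positive definite, which is not assumed. What \emph{is} assumed is that $A(s)$ is uniformly invertible, which gives an inf-sup (Babu\v{s}ka–Ne\v{c}as) condition on the continuous level; transferring this to a discrete inf-sup condition uniform in $n$ is the delicate part, and strictly speaking the clean bound \eqref{eq:galerkinerr} with an $n$-independent constant relies on it. I expect the authors either to assume such uniform discrete stability implicitly (it is what their earlier invocation of Brenner–Scott is doing) or to absorb a possibly $n$-dependent but subgeometric factor into $C$; in the plan I would flag this and then proceed under the quasi-optimality bound, since the geometric factor $\rho^{-n}$ dominates any algebraic growth anyway.
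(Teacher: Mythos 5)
Your proposal is correct and follows essentially the same route as the paper: a quasi-optimality bound for the Galerkin error (the paper simply cites the standard estimate from Canuto et al.~\cite{Canuto06}, absorbing into $C$ exactly the stability constant you worry about), followed by the classical Bernstein-ellipse decay of Chebyshev coefficients to bound the polynomial approximation error by $C\rho^{-n}$ componentwise with $\rho<\min_i\rho_i^\ast$. The uniform discrete inf-sup issue you flag is genuine but is handled in the paper only by that citation, just as you anticipated.
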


\begin{proof}
We begin with the standard error estimate for the Galerkin
method~\cite[Section 6.4]{Canuto06} in the $L^2$ norm,
\begin{equation}
\norm{x-x_{g,n}}{L^2}\leq C \norm{x-R_nx}{L^2}.
\end{equation}
The constant $C$ is independent of $n$ but depends on the extremes of the
bounded eigenvalues of $A(s)$. Under the consistency hypothesis, the operator
$R_n$ is a projection operator such that
\begin{equation}
\norm{x_i-R_nx_i}{L^2}\rightarrow 0,\qquad n\rightarrow\infty.
\end{equation}
for $i=0,\dots,N-1$.
For our purpose, we let $R_nx$ be the expansion of $x(s)$ in terms of the
Chebyshev polynomials, 
\begin{equation}
\label{eq:chebseries}
R_nx(s) = \sum_{k=0}^{n-1}\va_kT_k(s),
\end{equation}
where $T_k(s)$ is the $k$th Chebyshev polynomial, and
\begin{equation}
\va_{k,i} = \frac{2}{\pi c_k}\int_{-1}^1 x_i(s)T_k(s)(1-s^2)^{-1/2}\,ds,\qquad 
c_k=\left\{
\begin{array}{cc}
2 & \mbox{ if $k=0$}\\
1 & \mbox{ otherwise}
\end{array}
\right.
\end{equation}
for $i=0,\dots,N-1$.
Since $x(s)$ is continuous for all $s\in[-1,1]$ and $w(s)$ is normalized, we
can bound
\begin{equation}
\norm{x-R_nx}{L^2} \leq \sqrt{N}\norm{x-R_nx}{L^\infty}
\end{equation}
The Chebyshev series
converges uniformly for functions that are continuous on
$[-1,1]$, so we can bound
\begin{align}
\norm{x-R_nx}{L^\infty} &= \norm{\sum_{k=n}^\infty\va_kT_k(s)}{L^\infty}\\
 &\leq \norm{\sum_{k=n}^\infty|\va_k|}{\infty}
\end{align}
since $-1\leq T_k(s)\leq 1$ for all $k$. To be sure, the quantity $|\va_k|$ is
the component-wise absolute value of the constant vector $\va_k$, and the norm
$\|\cdot\|_\infty$ is the standard infinity norm on $\mathbb{R}^N$.

 Using the classical
result stated in~\cite[Section 3]{Gottlieb77}, we have
\begin{equation}
\limsup_{k\rightarrow\infty}|\va_{k,i}|^{1/k}=\frac{1}{\rho^\ast_i},\qquad
i=0,\dots,N-1
\end{equation}
where $\rho^\ast_i$ is the sum of the semi-axes of the greatest ellipse with
foci at $\pm 1$ in which $x_i(s)$ is analytic. This implies that asymptotically
\begin{equation}
|\va_{k,i}|=\mathcal{O}\left(\frac{\rho_i}{k}\right),\qquad
i=0,\dots,N-1.
\end{equation}
for $\rho_i<\rho^\ast_i$. We take $\rho=\min_i \rho_i$, which suffices to prove
the estimate \eqref{eq:galerkinerr}.  
\end{proof}

Theorem \ref{thm:galerkinerr} recalls the well-known fact that the convergence
of many polynomial approximations (e.g.~power series, Fourier series) depend
on the size of the region in the complex plane in which the function is
analytic. Thus, the location of the singularity nearest the interval $[-1,1]$
determines the rate at which the approximation converges as one includes
higher powers in the polynomial approximation. Next we derive a similar result
for the pseudospectral approximation using the fact that it interpolates
$x(s)$ at the Gauss points of the weight function $w(s)$. 


\begin{theorem}[Pseudospectral Asymptotic Error Estimate]
\label{thm:pseudospecerr}
Let $\rho^\ast$ be the sum of the semi-axes of the greatest ellipse with foci
at $\pm 1$ in which $x_i(s)$ is analytic for $i=0,\dots,N-1$. Then for
$1<\rho<\rho^\ast$, the asymptotic error in the pseudospectral approximation is 
\begin{equation}
\label{eq:pseudospecerr}
\norm{x-x_{p,n}}{L^2} \leq C\rho^{-n},
\end{equation} 
where $C$ is a constant independent of $n$. 
\end{theorem}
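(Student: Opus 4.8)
The plan is to mimic the structure of the proof of Theorem~\ref{thm:galerkinerr} but replace the best-approximation/projection argument with an interpolation-error argument, exploiting the fact (established in Theorem~\ref{thm:pseudoequalcollocation}) that $x_{p,n}=x_{c,n}$ interpolates $x(s)$ at the $n$ Gauss points $\lambda_0,\dots,\lambda_{n-1}$ of the weight $w(s)$. First I would reduce the vector-valued estimate to a scalar one: by Definition~\ref{def:norm} and the fact that $w(s)$ is normalized, $\norm{x-x_{p,n}}{L^2}\le\sqrt{N}\,\norm{x-x_{p,n}}{L^\infty}=\sqrt{N}\,\max_i\sup_{s}|x_i(s)-(x_{p,n})_i(s)|$, so it suffices to bound the interpolation error of each scalar analytic function $x_i(s)$ by its degree-$(n-1)$ Lagrange interpolant at the Gauss nodes.

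Next I would invoke the classical theory of polynomial interpolation of analytic functions in an ellipse. Since $x_i(s)$ is analytic inside the Bernstein ellipse $E_{\rho^\ast_i}$ with foci $\pm1$ and semi-axis sum $\rho^\ast_i$, the standard interpolation error formula together with the bound on the node polynomial $\prod_{j}(s-\lambda_j)$ gives, for any $1<\rho<\rho^\ast_i$, an estimate of the form $\sup_{s\in[-1,1]}|x_i(s)-p_{n-1,i}(s)|\le C_i\,\rho^{-n}$ for a constant $C_i$ independent of $n$; this is the analogue for interpolation of the Chebyshev-coefficient decay used in the previous proof, and can be cited from the same classical sources (e.g.~\cite{Gottlieb77}, or standard approximation-theory references). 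The one point requiring a remark is that here the interpolation nodes are the Gauss points of an \emph{arbitrary} admissible weight $w(s)$, not the Chebyshev points; however, the Gauss nodes all lie in $(-1,1)$ and are the zeros of the orthonormal polynomial $\pi_n(s)$, so the node polynomial is a scalar multiple of $\pi_n$, and the needed bound $\max_{s\in[-1,1]}|\pi_n(s)|/\min_{z\in E_\rho}|\pi_n(z)|=\mathcal{O}(\rho^{-n})$ follows from the standard $n$th-root asymptotics of orthogonal polynomials on $[-1,1]$ (the $n$th root of $|\pi_n|$ tends to $1$ on $[-1,1]$ and to the value of the conformal map on $E_\rho$ off the interval). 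Setting $\rho=\min_i\rho_i<\min_i\rho^\ast_i=\rho^\ast$ and $C=\sqrt{N}\max_i C_i$ then yields \eqref{eq:pseudospecerr}.

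I expect the main obstacle to be making the node-polynomial bound clean for a general weight $w(s)$ rather than a specific classical family: one must be slightly careful that the implied constant in the $\rho^{-n}$ bound does not hide an $n$-dependence coming from the normalization of $\pi_n$ or from the ratio of its sup-norm on $[-1,1]$ to its minimum modulus on $E_\rho$. In keeping with the stated goal of the section --- offering intuition rather than sharpest constants --- I would handle this by citing the classical $n$th-root asymptotics for orthogonal polynomials with positive weight on $[-1,1]$ and absorbing any subexponential factors into the freedom $\rho<\rho^\ast$, exactly as the Galerkin proof absorbs the $1/k$ factor in $|\va_{k,i}|=\mathcal{O}(\rho_i/k)$. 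Everything else is routine and parallels the preceding theorem.
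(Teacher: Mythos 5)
Your proposal is correct in outline, but it takes a genuinely different route from the paper. The paper never bounds the sup-norm of the interpolation error: it invokes an Erd\H{o}s--Tur\'an-type result (Theorem 4.8 of Rivlin) stating that for interpolation at the Gauss nodes of $w$ one has
\begin{equation*}
\int_{-1}^1 \bigl(x_i(s)-x_{c,n,i}(s)\bigr)^2 w(s)\,ds \;\le\; 4E_n^2(x_i),
\end{equation*}
where $E_n(x_i)$ is the best \emph{uniform} approximation error; this bound follows from Gauss-quadrature exactness and positivity of the weights, holds for any admissible $w$, and lands directly in the weighted $L^2$ norm of the theorem. The best-approximation error is then bounded by the Chebyshev tail exactly as in the Galerkin proof, so the only analytic input is the Chebyshev-coefficient decay already used there. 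You instead pass to $L^\infty$ and estimate the interpolation error via the Hermite contour-integral formula, which forces you to control the node-polynomial ratio $\max_{[-1,1]}|\pi_n|/\min_{E_\rho}|\pi_n|$. That is where your argument is heaviest: for a general weight this requires $n$th-root (regularity) asymptotics both off the interval and for the sup-norm of $\pi_n$ on $[-1,1]$; the latter is not automatic from the exterior asymptotics alone, but does hold here because $w>0$ on $[-1,1]$ puts the measure in the Erd\H{o}s--Tur\'an/Stahl--Totik regular class, and the subexponential factors can be absorbed by shrinking $\rho$, as you note. So your route works and even yields the stronger uniform-norm estimate, at the cost of invoking nontrivial orthogonal-polynomial asymptotics; the paper's route is lighter, avoids the Lebesgue-constant/node-polynomial issue entirely, and stays in the norm in which the result is stated.
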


\begin{proof}
Recall that $x_{c,n}(s)$ is the Lagrange interpolant of $x(s)$ at the Gauss
points of $w(s)$, and let $x_{c,n,i}(s)$ be the $i$th component of
$x_{c,n}(s)$. We will use the result from~\cite[Theorem 4.8]{Rivlin69} that
\begin{equation}
\label{eq:lagrangeerr}
\int_{-1}^1 (x_i(s) - x_{c,n,i}(s))^2w(s)\,ds
\leq
4E^2_n(x_i),
\end{equation}
where $E_n(x_i)$ is the error of the best approximation polynomial in the
uniform norm. We can, again, bound $E_n(x_i)$ by the error of the Chebyshev
expansion \eqref{eq:chebseries}. Using Theorem \ref{thm:pseudoequalcollocation}
with equation \eqref{eq:lagrangeerr},
\begin{align*}
\norm{x-x_{p,n}}{L^2} &= \norm{x-x_{c,n}}{L^2}  \\
&\leq 2\sqrt{N}\norm{x-R_nx}{L^\infty}.
\end{align*}
The remainder of the proof proceeds exactly as the proof of theorem
\ref{thm:galerkinerr}. 
\end{proof}

We have shown, using classical approximation theory, that the interpolating
pseudospectral method and the Galerkin method have the same asymptotic rate of
geometric convergence. This rate of convergence depends on the size of the
region in the complex plane where the functions $x(s)$ are analytic. The
structure of the matrix equation reveals at least one singularity that occurs
when $A(s^\ast)$ is rank-deficient for some $s^\ast\in\mathbb{R}$, assuming the
right hand side $b(s^\ast)$ does not fortuitously remove it. For a general
parameterized matrix, this fact may not be useful. However, for many
parameterized systems in practice, the range of the parameter is dictated by
existence and/or stability criteria. The value that makes the system singular
is often known and has some interpretation in terms of the model. In these
cases, one may have an upper bound on $\rho$, which is the sum of the
semi-axes of the ellipse of analyticity, and this can be used to estimate the
geometric rate of convergence \emph{a priori}. 

We end this section with a residual error estimate -- similar to residual error
estimates for constant matrix equations -- that may be more useful in practice
than the asymptotic results.

\begin{theorem}
\label{thm:residerr}
Define the residual $r(y,s)$ as in equation \eqref{eq:resid}, and let
$e(y,s)=x(s)-y(s)$ be the $\mathbb{R}^N$-valued function representing the error
in the approximation $y(s)$. Then
\begin{equation}
\label{eq:residerr}
C_1\norm{r(y)}{L^2}\leq\norm{e(y)}{L^2}\leq C_2\norm{r(y)}{L^2}
\end{equation}
for some constants $C_1$ and $C_2$, which are independent of $y(s)$. 
\end{theorem}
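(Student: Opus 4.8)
The plan is to turn the residual into the error through the exact identity $r=-A\,e$, and then convert pointwise Euclidean estimates into the integrated $L^2$ norms of Definition \ref{def:norm}.

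First I would note that, since $x(s)$ solves \eqref{eq:main}, for any candidate $y$ we have
\[
r(y,s) = A(s)y(s) - b(s) = A(s)\bigl(y(s) - x(s)\bigr) = -A(s)\,e(y,s),
\]
and, because $A(s)$ is invertible on $[-1,1]$, equivalently $e(y,s) = -A(s)^{-1}r(y,s)$. Taking the Euclidean norm on $\mathbb{R}^N$ for each fixed $s$ and using submultiplicativity of the spectral norm gives the two pointwise inequalities
\[
\|r(y,s)\|_2 \le \|A(s)\|_2\,\|e(y,s)\|_2, \qquad \|e(y,s)\|_2 \le \|A(s)^{-1}\|_2\,\|r(y,s)\|_2 .
\]

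Next I would set $\smax := \sup_{-1\le s\le 1}\|A(s)\|_2$ and $\smin := \inf_{-1\le s\le 1}\sigma_{\min}(A(s))$, so that $\|A(s)^{-1}\|_2 = 1/\sigma_{\min}(A(s)) \le 1/\smin$ for every $s$. These constants are finite and positive: the singular values of $A(s)$ depend continuously on $s$ since $A(s)$ is analytic (hence continuous) on the compact interval $[-1,1]$, so $\smax<\infty$; and the standing hypothesis that $A(s)$ is bounded away from singularity on $[-1,1]$ forces $\smin>0$. They depend only on $A(s)$, not on $y$. Finally, recalling from Definition \ref{def:norm} that $\norm{f}{L^2}^2 = \int_{-1}^1 \|f(s)\|_2^2\,w(s)\,ds$ for any $\mathbb{R}^N$-valued $f$, I would multiply the two pointwise bounds by $w(s)\ge 0$ and integrate over $[-1,1]$ to obtain
\[
\norm{r(y)}{L^2}^2 \le \smax^2\,\norm{e(y)}{L^2}^2, \qquad \norm{e(y)}{L^2}^2 \le \frac{1}{\smin^2}\,\norm{r(y)}{L^2}^2,
\]
and taking square roots yields \eqref{eq:residerr} with $C_1 = 1/\smax$ and $C_2 = 1/\smin$.

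The only step that requires any care — and it is mild — is verifying that $\smax$ is finite and $\smin$ is strictly positive; this is precisely where the analyticity and uniform nonsingularity assumptions on $A(s)$ enter, and it rests only on compactness of $[-1,1]$ and continuity of singular values. The rest of the argument is the algebraic identity $r(y)=-A\,e(y)$ and an elementary estimate of the integrand, so I do not anticipate a genuine obstacle.
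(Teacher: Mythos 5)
Your proposal is correct and follows essentially the same route as the paper: both convert the residual into the error via $e(y,s)=-A^{-1}(s)r(y,s)$, use the boundedness of $A(s)$ and $A^{-1}(s)$ on $[-1,1]$ to get pointwise bounds, and integrate against $w(s)$ to obtain the two-sided $L^2$ estimate. Your version is merely more explicit about the constants ($C_1=1/\smax$, $C_2=1/\smin$) and about why compactness and the uniform nonsingularity assumption make them finite and positive, which the paper leaves implicit.
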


\begin{proof}
Since $A(s)$ is non-singular for all $s\in[-1,1]$, we can write
\begin{equation}
A^{-1}(s)r(y,s)\;=\;y(s)-A^{-1}(s)b(s)\;=\;e(y,s)
\end{equation}
so that
\begin{align*}
\norm{e(y)}{L^2}^2 &= \ip{e(y)^Te(y)}\\
&= \ip{r^T(y)A^{-T}A^{-1}r(y)}\\
\end{align*}
Since $A(s)$ is bounded, so is $A^{-1}(s)$. Therefore, there exist constants
$C_1^\ast$ and $C_2^\ast$ that depend only on $A(s)$ such that
\begin{equation}
C_1^\ast\ip{r^T(y)r(y)}\leq\ip{e^T(y)e(y)}\leq C_2^\ast\ip{r^T(y)r(y)}.
\end{equation}
Taking the square root yields the desired result.
\end{proof}

Theorem \ref{thm:residerr} states that the $L^2$ norm of the residual behaves
like the $L^2$ norm of the error. In many cases, this residual error may be
much easier to compute than the true $L^2$ error. However, as in residual error
estimates for constant matrix problems, the constants in Theorem
\ref{thm:residerr} will be large if the bounds on the eigenvalues of $A(s)$
are large.  We apply these results in the next section with two numerical
examples.

\section{Numerical Examples}
\label{sec:examples}
We examine two simple examples of spectral methods applied to parameterized
matrix equations. The first is a $2\times 2$ symmetric parameterized matrix,
and the second comes from a discretized second order ODE. In both cases, we
relate the convergence of the spectral methods to the size of the region of analyticity
and verify this relationship numerically. We also compare the behavior of the
true error to the behavior of the residual error estimate from theorem
\ref{thm:residerr}. 

To keep the computations simple, we use a constant weight function $w(s)$.
The corresponding orthonormal polynomials are the normalized Legendre
polynomials, and the Gauss points are the Gauss-Legendre points.

\subsection{A $2\times 2$ Parameterized Matrix Equation}

Let $\epsilon>0$, and consider the following parameterized matrix equation
\begin{equation}
\label{eq:2x2}
\bmat{1+\varepsilon & s\\ s & 1}\bmat{x_0(s) \\ x_1(s)} = \bmat{2\\ 1}.
\end{equation}
For this case, we can easily compute the exact solution,
\begin{equation}
x_0(s)=\frac{2-s}{1+\varepsilon-s^2},\qquad
x_1(s)=\frac{1+\varepsilon-2s}{1+\varepsilon-s^2}.
\end{equation}
Both of these functions have a poles at $s=\pm\sqrt{1+\varepsilon}$, so the
sum of the semi-axes of the ellipse of analyticity is bounded, i.e. 
$\rho<\sqrt{1+\varepsilon}$. Notice that the matrix is linear in $s$, and the
right hand side has no dependence on $s$. Thus, corollary \ref{cor:equiv}
implies that the Galerkin approximation is equal to the pseudospectral
approximation for all $n$; there is no need to solve the system
\eqref{eq:galerkinsys} to compute the Galerkin approximation. 
In figure \ref{fig:ex1} we plot both the true $L^2$ error and the residual error
estimate for four values of $\varepsilon$. The results confirm the analysis.

\begin{figure}
\label{fig:ex1}
\begin{center}
\includegraphics[scale=0.38]{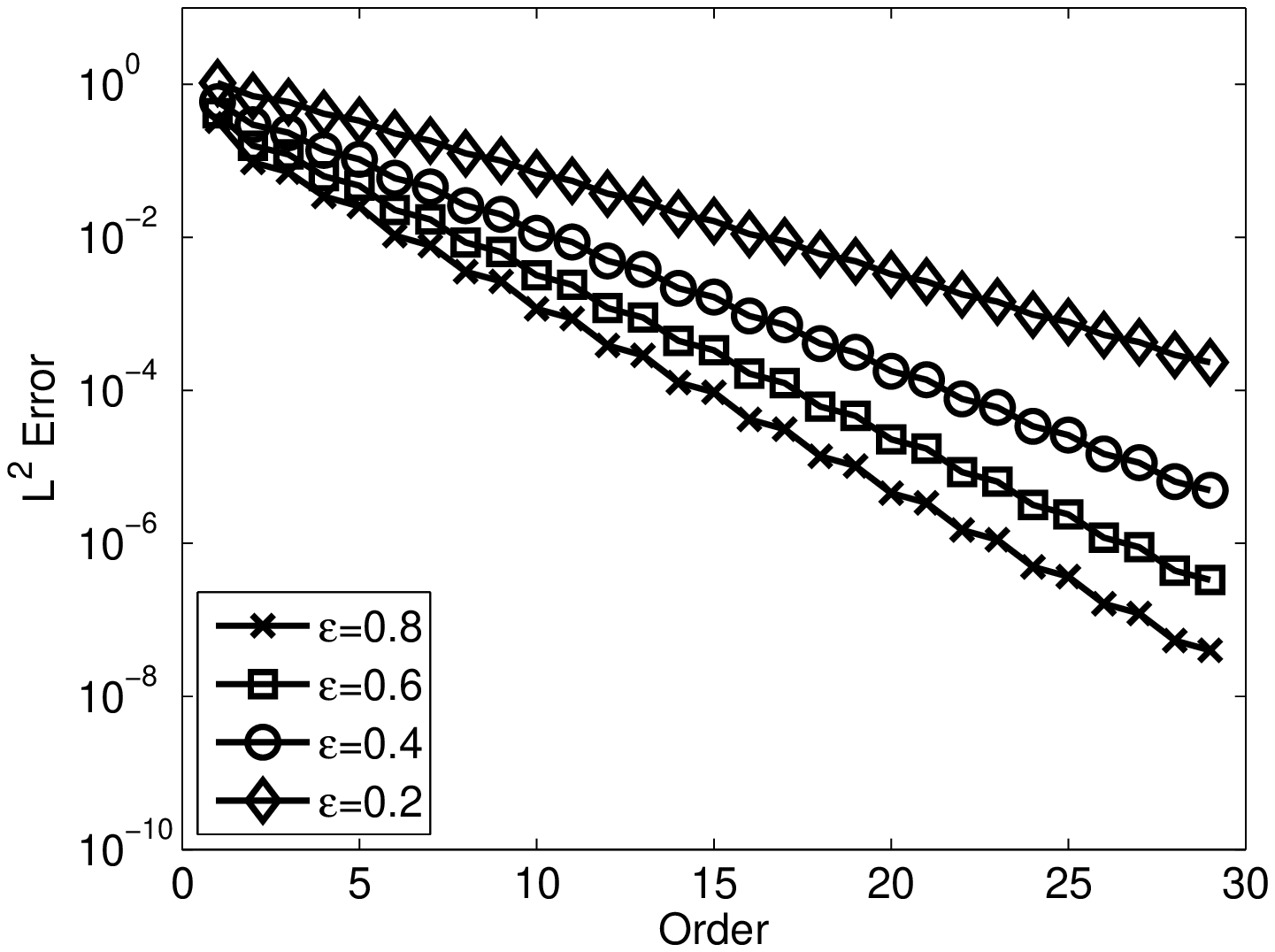}
\quad
\includegraphics[scale=0.38]{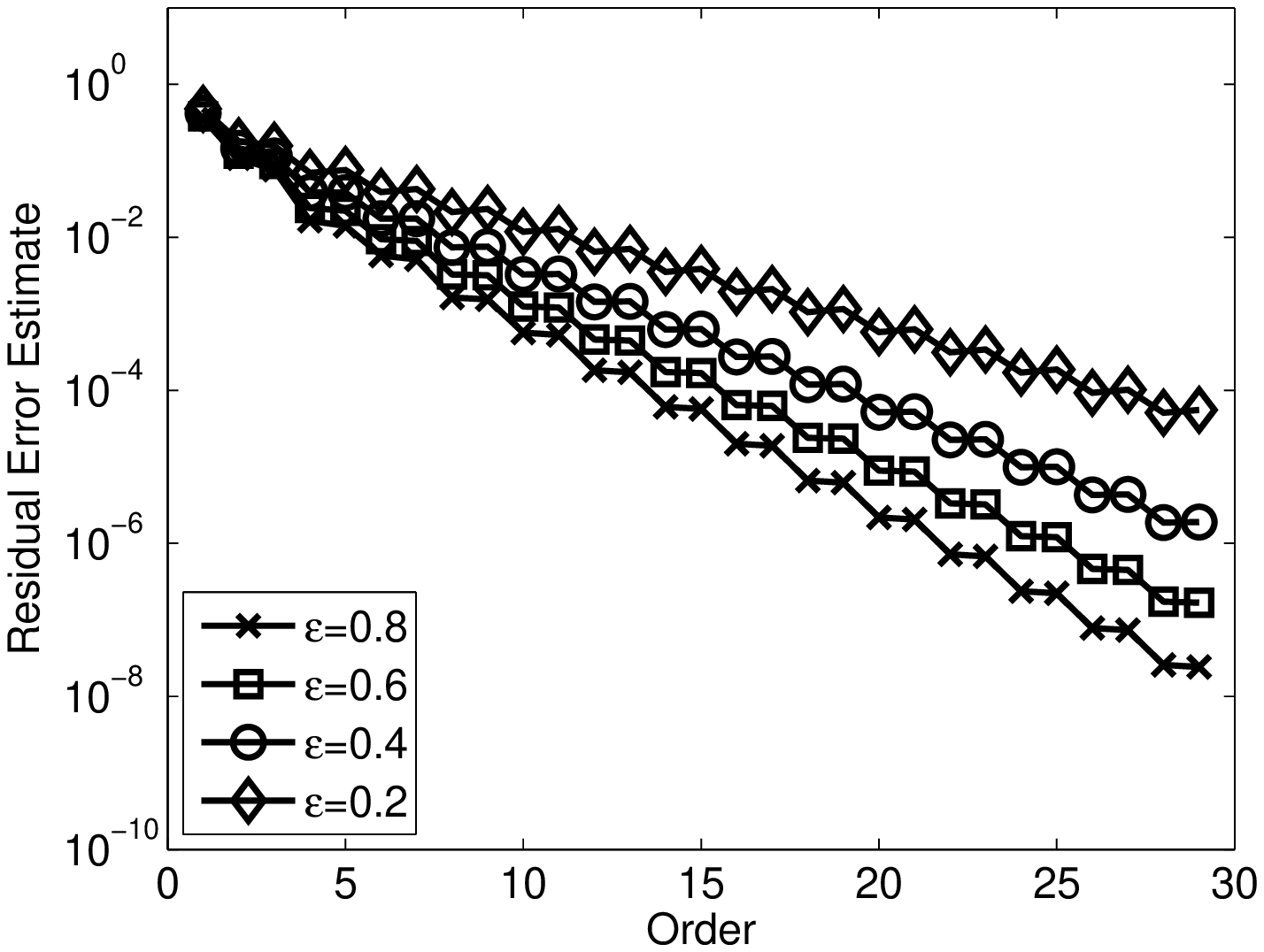}
\end{center}
\caption{The convergence of the spectral methods applied to equation
\eqref{eq:2x2}. The figure on the left shows plots the $L^2$ error as the order
of approximation increases, and the figure on the right plots the residual
error estimate. The stairstep behavior relates to the fact that
$x_0(s)$ and $x_1(s)$ are odd functions over $[-1,1]$.}
\end{figure}

\subsection{A Parameterized Second Order ODE}

Consider the second order boundary value problem
\begin{align}
\label{eq:ode}
\frac{d}{dt}\left(
\alpha(s,t)\frac{du}{dt}
\right)
&=
1\qquad t\in[0,1]\\
u(0)&=0\\
u(1)&=0
\end{align}
where, for $\varepsilon>0$,
\begin{equation}
\label{eq:coefficients}
\alpha(s,t) = 1+4\cos(\pi s)(t^2-t),\qquad s\in[\varepsilon,1].
\end{equation}
The exact solution is 
\begin{equation}
\label{eq:exactsol}
u(s,t) = \frac{1}{8\cos(\pi s)}\ln\left(1+4\cos(\pi s)(t^2-t)\right).
\end{equation}
The solution $u(s,t)$ has a singularity at $s=0$ and $t=1/2$. Notice that we
have adjusted the range of $s$ to be bounded away from 0 by $\varepsilon$.
We use a standard piecewise linear Galerkin finite element method with $512$
elements in the $t$ domain to construct a stiffness matrix parameterized by
$s$, i.e.
\begin{equation}
\label{eq:femat}
(K_0+\cos(\pi s)K_1)x(s) = b.
\end{equation}
Figure \ref{fig:ex2} shows the convergence of the residual error estimate for both
Galerkin and pseudospectral approximations as $n$ increases. (Despite having the
exact solution \eqref{eq:exactsol} available, we do not present the decay of the
$L^2$ error; it is dominated entirely by the discretization error in the $t$
domain.) As
$\varepsilon$ gets closer to zero, the geometric convergence rate of the 
spectral methods degrades considerably. Also, note that each element of the
parameterized stiffness matrix is an analytic function of $s$, but figure
\ref{fig:ex2} verifies that the less expensive pseudospectral approximation
converges at the same rate as the Galerkin approximation.

\begin{figure}
\label{fig:ex2}
\begin{center}
\includegraphics[scale=0.38]{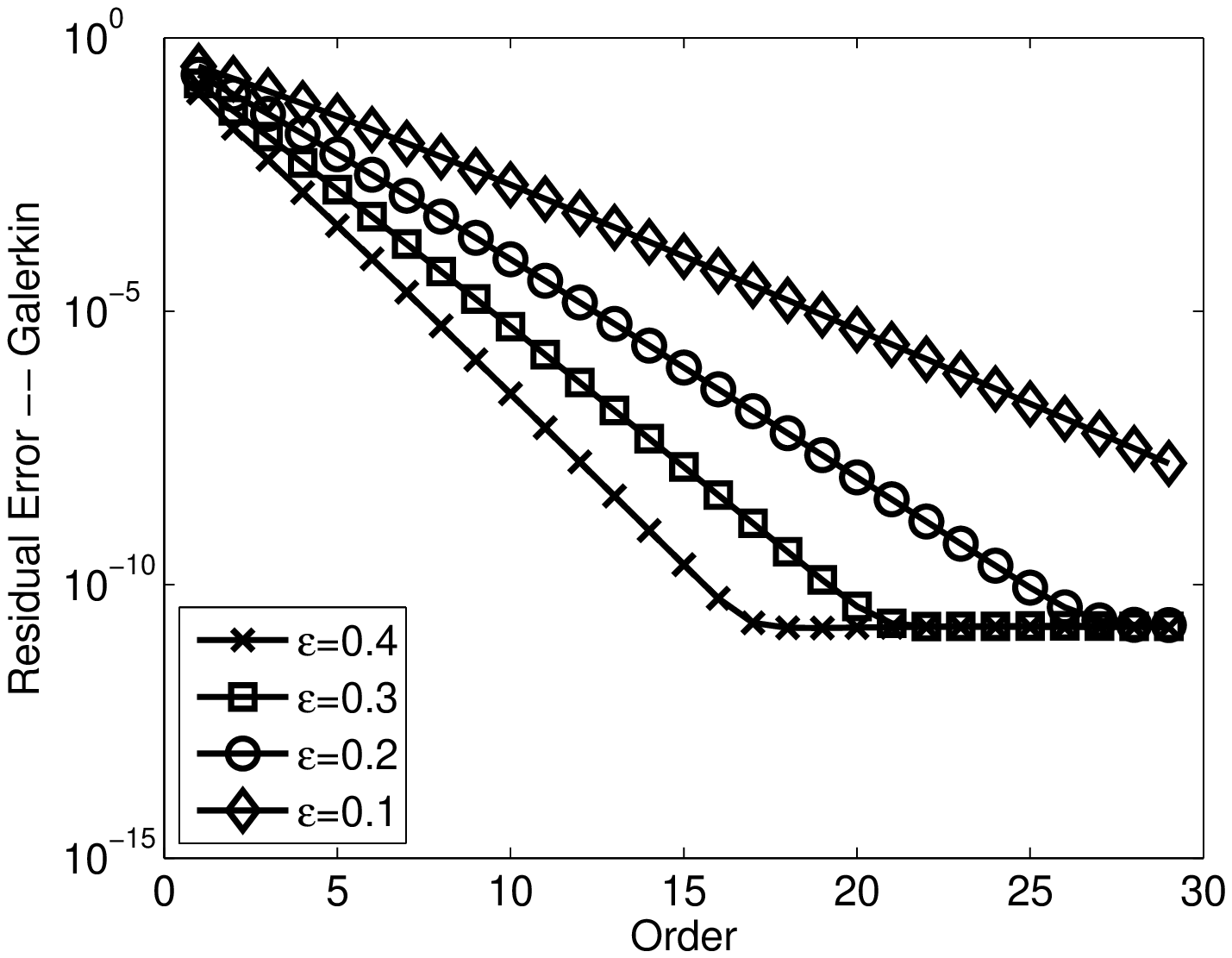}
\quad
\includegraphics[scale=0.38]{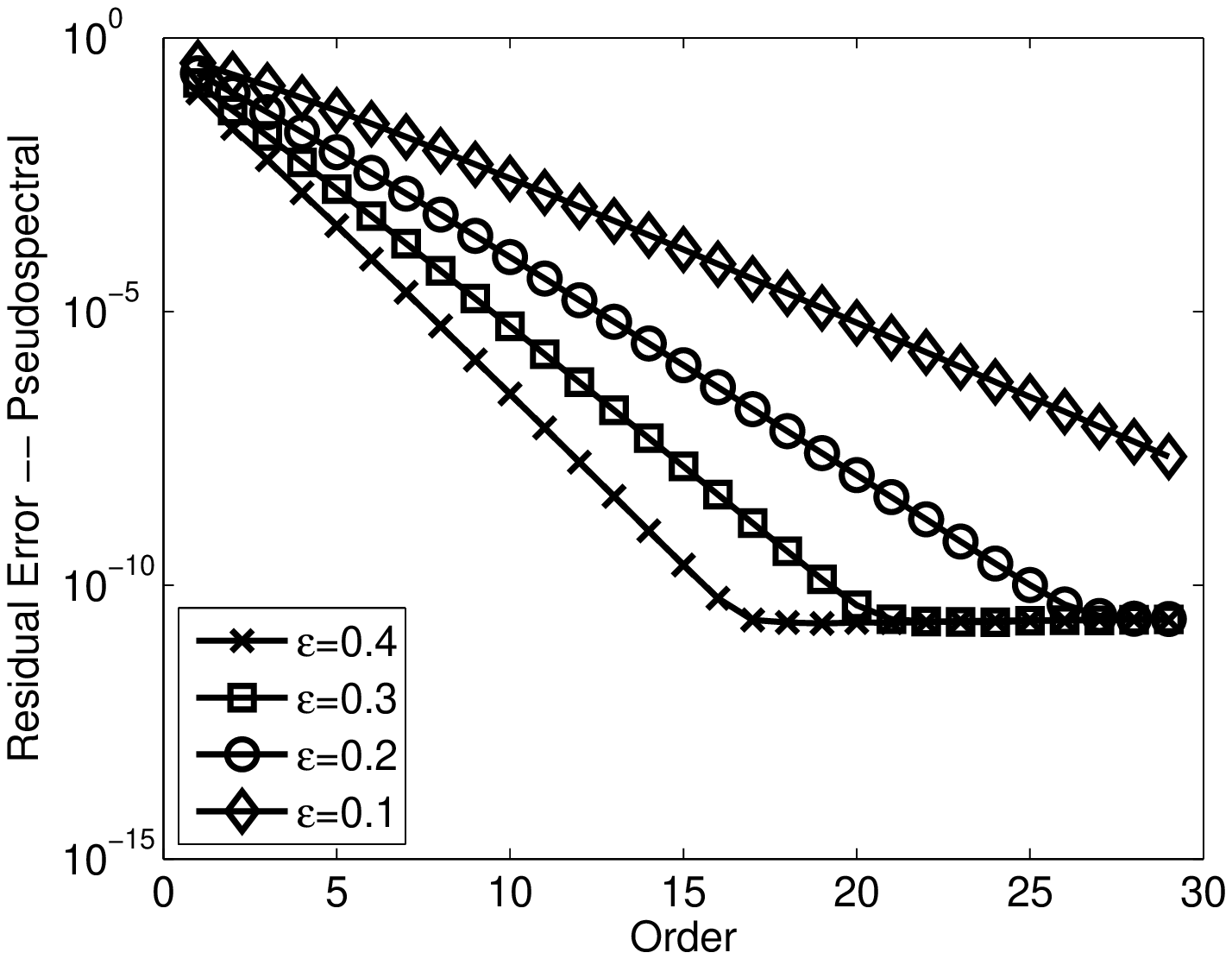}
\end{center}
\caption{The convergence of the residual error estimate for the Galerkin and
pseudospectral approximations applied to the parameterized matrix equation
\eqref{eq:femat}.}
\end{figure}

\section{Summary and Conclusions}
\label{sec:summary}
We have presented an application of spectral methods to parameterized matrix
equations. Such parameterized systems arise in many applications. The goal of
a spectral method is to construct a global polynomial approximation of the
$\mathbb{R}^N$-valued function that satisfies the parameterized system.  

We derived two basic spectral methods: (i) the interpolatory pseudospectral
method, which approximates the coefficients of the truncated Fourier series
with Gauss quadrature formulas, and (ii) the Galerkin method, which finds an
approximation in a finite dimensional subspace by requiring that the equation
residual be orthogonal to the approximation space. The primary work involved in
the pseudospectral method is solving the parameterized system at a finite set
of parameter values, whereas the Galerkin method requires the solution of a
coupled system of equations many times larger than the original parameterized
system. 

We showed that one can interpret the differences between these
two methods as a choice of when to truncate an infinite linear system of
equations. Employing this relationship we derived conditions under which these
two approximations are equivalent. In this case, there is no reason to solve the
large coupled system of equations for the Galerkin approximation.

Using classical techniques, we presented asymptotic error estimates
relating the decay of the error to the size of the region of analyticity of the
solution; we also derived a residual error estimate that may be more useful in
practice. We verified the theoretical developments with two numerical examples:
a $2\times 2$ matrix equation and a finite element discretization of a
parameterized second order ODE. 

The popularity of spectral methods for PDEs stems from their \emph{infinite}
(i.e. geometric) order of convergence for smooth functions compared to finite
difference schemes. We have the same advantage in the case of parameterized
matrix equations, plus the added bonus that there are no boundary conditions
to consider. The primary concern for these methods is determining the value of
the parameter closest to the domain that renders the system singular. 

\section{Acknowledgements}

We would like to thank James Lambers for his helpful and insightful feedback.
The first and third authors were funded by the Department of Energy Predictive
Science Academic Alliance Program and the second author was supported
by a Microsoft Live Labs Fellowship.

\bibliographystyle{siam}
\bibliography{paulconstantine}

\end{document}